\documentclass[a4paper,11pt]{amsart}
\usepackage[T1]{fontenc}
\usepackage{soul}
\usepackage{amssymb}
\usepackage{enumitem}
\usepackage{lmodern}
\usepackage{mathtools}
\usepackage[]{microtype}
\usepackage{nicefrac}
\usepackage[initials, alphabetic]{amsrefs}
\usepackage[hidelinks]{hyperref}
\usepackage{cleveref}

\numberwithin{equation}{section}
\newtheorem{theorem}{Theorem}[section]
\newtheorem{corollary}[theorem]{Corollary}
\newtheorem{lemma}[theorem]{Lemma}

\newtheorem{proposition}[theorem]{Proposition}

\newtheorem*{theorem*}{Theorem}

\newtheorem{theoremintro}{Theorem}

\theoremstyle{definition}
\newtheorem{definition}[theorem]{Definition}
\newtheorem{remark}[theorem]{Remark}

\newtheorem*{problem*}{Problem}

\newcommand{\bbC}{\mathbb{C}}

\newcommand{\cD}{\mathcal{D}}

\newcommand{\bbN}{\mathbb{N}}

\newcommand{\cP}{\mathcal{P}}

\newcommand{\bbR}{\mathbb{R}}

\newcommand{\cU}{\mathcal{U}}

\newcommand{\cV}{\mathcal{V}}

\newcommand{\cW}{\mathcal{W}}

\newcommand{\bbZ}{\mathbb{Z}}

\newcommand{\cZ}{\mathcal{Z}}

\newcommand{\e}{\varepsilon}

\renewcommand{\phi}{\varphi}

\newcommand{\rank}{\mathrm{rank}}
\newcommand{\sa}{\mathrm{sa}}

\title[Stable rank one, local homogeneity and uniform property $\Gamma$]{Stable rank one, tracial local homogeneity and uniform property $\Gamma$}

\author{Andrea Vaccaro}
\address{Andrea Vaccaro, Institut Camille Jordan, Université Claude Bernard Lyon 1, 43 Boulevard du 11 novembre 1918, 69622, Villeurbanne, France}
\email{vaccaro@math.univ-lyon1.fr}
\urladdr{https://sites.google.com/view/avaccaro}

\begin{document}
\maketitle

\vspace{-1.5em}

\begin{abstract}
We prove that separable, simple, unital, non-elementary, stably finite $C^\ast$-algebras that have stable rank one, and that have locally finite nuclear dimension in a tracial sense, have uniform property $\Gamma$. In particular, Villadsen algebras of the first type and crossed products of free minimal actions of FC (in particular, abelian) groups on compact metric spaces have uniform property $\Gamma$. This implies that all these $C^\ast$-algebras satisfy the Toms--Winter conjecture, a fact already known for $C^\ast$-algebras with stable rank one and locally finite nuclear dimension, and here recovered via a different approach.
\end{abstract}

\vspace{1em}

In their celebrated last work on rings of operators \cite{MvN:IV}, Murray and von Neumann used the existence of non-trivial asymptotically central sequences as an invariant to distinguish the hyperfinite II$_1$ factor from the free group factor, thereby providing the first examples of non-isomorphic II$_1$ factors. The notion indicating the existence of such sequences was simply labelled \emph{property $\mathit{\Gamma}$}, being one among several properties considered in the paper---the others expressing equivalent formulations of what now would be called \emph{hyperfiniteness}---and indexed by letters.

Property $\Gamma$ would later come to play a key role in the subject, thanks to the emergence of central sequence algebras as a fundamental tool in the theory, beginning with the work of McDuff \cite{mcduff:central}, and it has since remained an important concept in von Neumann algebras. This notion eventually found its way into the setting of stably finite $C^\ast$-algebras, with an adaptation, \emph{uniform property $\Gamma$} (\Cref{def:gamma}), introduced in \cite{CETWW} as an auxiliary tool used to confirm one of the implications of the Toms--Winter conjecture.

Although its formulation is, \emph{mutatis mutandis}, formally close to that of its counterpart for II$_1$ factors, uniform property $\Gamma$ underlies a much more complex and less apparent form of regularity.
Broadly speaking, uniform property $\Gamma$ expresses a powerful form of approximately central tracial divisibility, and it has been observed to have strong effects on the structure of the $C^\ast$-algebras that satisfy it (see \cite{CCEGSTW}). Shortly after its introduction, its connection to the Toms--Winter conjecture---which asserts that for separable, simple, non-elementary, nuclear $C^\ast$-algebras the three key regularity conditions $\cZ$-stability, finite nuclear dimension and strict comparison are equivalent (see \cite[Conjecture 5]{STW:99problems})---was further reinforced in \cite{CETW:gamma}, where it was  discovered that the conjecture holds true under the assumption of uniform property $\Gamma$, naturally prompting the following question.

\begin{problem*}[{\cite[Question C]{CETW:gamma}, \cite[Problem XIX]{STW:99problems}}]
Do all separable, simple, unital, non-elementary (i.e. non-isomorphic to the compact operators) nuclear, stably finite $C^\ast$-algebras have uniform property $\Gamma$?
\end{problem*}

This problem remains open and, to the best of our knowledge, there are essentially three, or rather four, routes to establishing uniform property $\Gamma$.

The first---and farthest from optimal---is assuming $\cZ$-stability (see \cite[Proposition 2.3]{CETWW}). The second, possible if the given $C^\ast$-algebra is nuclear, is requiring that the trace space is \emph{tractable} in a topological sense---more precisely, that the space of extremal traces is compact and has finite covering dimension. This was proved in \cite{KR:central, TWW:fin_dim, Sato:traces} and more recently generalized under weaker assumptions even beyond the nuclear setting in \cite{ES:gamma}. The third method---perhaps the least known among non-experts---is highlighted in \cite[Theorem 5.5]{CETW:gamma} and is due to Winter, who proved in \cite{Winter:pure} that tracial almost divisibility (see \Cref{def:tad}) implies uniform property $\Gamma$ for simple $C^\ast$-algebras that have locally finite nuclear dimension, that is $C^\ast$-algebras that can be locally approximated by $C^\ast$-algebras with finite nuclear dimension (\Cref{def:loc_fin_nucdim}).

The fourth and last route only applies to $C^\ast$-algebras that are crossed products of topological dynamical systems, and it is due to Kerr and Szab\'o. They proved in \cite{KS:sbp} that free actions of countably infinite amenable groups on compact metric spaces generate crossed products with uniform property $\Gamma$, if they satisfy the \emph{small boundary property} (see \cite[Definition 5.1]{KS:sbp}), a dynamical formulation of zero-dimensionality which was later shown to be actually equivalent to a relative version of uniform property $\Gamma$ for the Cartan pairs generated by the crossed products \cite{KLTV}.

The following question points to a remarkable blind spot in the state of the art: are there examples of simple nuclear $C^\ast$-algebras with uniform property $\Gamma$ that are non-$\cZ$-stable \emph{and} have intractable trace space? A natural test case is the class of non-$\cZ$-stable Villadsen algebras of the first type from \cite{Villadsen:1}---their trace space is the Poulsen simplex, by \cite[Theorem 4.5]{ELN:Villadsen}---for which this problem has been open for some years and more recently advertised in \cite[Problem XX]{STW:99problems}.
Other natural candidates to consider are the non-$\cZ$-stable crossed products of free minimal $\bbZ$-actions obtained in \cite{GiolKerr}, which cannot have tractable trace spaces by \cite[Theorem 1.4]{EN:sbp}.

In this paper we finally settle the problem of uniform property $\Gamma$ for these and other $C^\ast$-algebras, solving affirmatively \cite[Problem XX]{STW:99problems}.
\begin{theoremintro}[\Cref{cor:vill}, \Cref{cor:cp}] \label{thm_main:Vill_cp}
Suppose that $A$ is either
\begin{enumerate}
\item A simple, unital, non-elementary AH-algebra with stable rank one, for instance a non-elementary Villadsen algebra of the first type.
\item A crossed product of a free minimal action by a countably infinite FC (e.g. abelian) group on a compact metric space.
\end{enumerate}
Then $A$ has uniform property $\Gamma$.
\end{theoremintro}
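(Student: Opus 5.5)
Both cases will be obtained from the general theorem announced in the abstract. That theorem says that a separable, simple, unital, non-elementary, stably finite $C^\ast$-algebra with stable rank one which has locally finite nuclear dimension \emph{in a tracial sense} has uniform property $\Gamma$. The plan therefore has two parts: prove this general result, and then check its hypotheses in the two situations of \Cref{thm_main:Vill_cp}.

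\textbf{The general result.} I would follow Winter's third route, \cite[Theorem 5.5]{CETW:gamma}, in which tracial almost divisibility (\Cref{def:tad}) together with locally finite nuclear dimension (\Cref{def:loc_fin_nucdim}) yields uniform property $\Gamma$. Two modifications are needed.
\begin{enumerate}
\item Replace the local approximation in norm by an approximation that is only required in $2$-norm, uniformly over all traces. This should be harmless, because uniform property $\Gamma$ is itself a statement about the uniform tracial ultrapower $A^\omega$, so only tracial approximations are ever tested.
\item Obtain tracial almost divisibility from stable rank one. In a simple, stably finite algebra of stable rank one, the Cuntz semigroup is well behaved: by the results of Antoine--Perera--Robert--Thiel it is almost divisible in the relevant tracial sense, and the rank functions of elements realize the expected values. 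I would use this to produce, inside each finite nuclear dimension subalgebra $B$, positive contractions that divide the unit approximately evenly in every trace.
\end{enumerate}
Winter's argument then runs as in the original. Finite nuclear dimension provides order-zero maps from finite-dimensional algebras. Composing these with the divisible elements gives approximately central projections in $A^\omega\cap A'$ which split every limit trace in half.

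\textbf{The main obstacle.} I expect the hard step to be making the divisible elements \emph{approximately central} with respect to the whole finite set, and not merely relative to $B$. Norm divisibility is unavailable, since we have neither $\cZ$-stability nor strict comparison. I would first try to work directly in $A^\omega$ and use stable rank one to perturb Cuntz subequivalences into genuine unitary conjugacies. These conjugacies can then be averaged against the finite-dimensional pieces coming from the nuclear dimension approximation.

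\textbf{Case (1).} A simple, unital AH-algebra is an inductive limit of homogeneous algebras $\bigoplus M_n(C(X))$. Each such algebra has finite nuclear dimension when the spaces $X$ are finite CW complexes. Hence the algebra has locally finite nuclear dimension in the ordinary sense, and a fortiori in the tracial sense. Stable rank one is assumed, and it holds for Villadsen algebras of the first type. Non-elementary together with unital gives stable finiteness, so the general result applies.

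\textbf{Case (2).} For free minimal actions of FC groups, I would cite the known result that such crossed products have stable rank one; for $\bbZ^d$ and abelian groups this is due to Niu and others. For the tracial local approximation, I would use a Rokhlin-type tower or castle decomposition. This gives subalgebras of the form $C(Y)\otimes M_n$, or subhomogeneous algebras over finite-dimensional pieces, that capture the finite set up to small tracial error. Since the tower bases need not be zero-dimensional, the approximation holds only up to a small tracial remainder, which is exactly why the tracial version of the hypothesis is needed. Verifying this castle approximation for FC groups, via their finite-conjugacy-class structure and a reduction to virtually abelian quotients, is the secondary technical step.
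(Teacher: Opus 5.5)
Your architecture matches the paper's: tracial almost divisibility plus tracially locally finite nuclear dimension, fed into Winter's Lemma~5.11, followed by a check of the hypotheses in the two cases. But the one genuinely new step is the one you wave through, namely deducing tracial almost divisibility from stable rank one.

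Rank realization for simple stable rank one algebras (Antoine--Perera--Robert--Thiel, \cite{Thiel:ranks}) gives you, for $a\in A_{1,+}$, some $y$ with $d_\tau(y)\approx d_\tau(a)/n$. A c.p.c.\ order zero map $M_n\to\overline{aAa}$ with large trace, however, requires $n$ orthogonal, mutually equivalent copies of such a $y$ inside $\overline{aAa}$, i.e.\ essentially $n[y]\le[a]$ in $\mathrm{Cu}(A)$. That is a comparison statement, and strict comparison fails for precisely the Villadsen algebras you want to cover. Rank realization only becomes genuine divisibility once comparison is available. (If stable rank one gave tracial almost divisibility off the shelf, Winter's route would have settled the Villadsen case long ago.)

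The missing idea is to divide \emph{without} comparison, using projections in the tracial ultrapower. By \cite{Fu:sr1_rr0}, stable rank one gives tracial approximate oscillation zero, i.e.\ $A^\cU$ has real rank zero. In $A^\cU$, choose a projection $p\in\overline{aA^\cU a}$ with $\tau(p)\ge\tau(a)-\e/2$, an element $b\in A_+$ with $d_\tau(b)<\e/(2n)$ (Glimm), and a projection $q\in\overline{bA^\cU b}$ whose ideal contains $p$ (so $\tau(q)<\e/(2n)$). Zhang's lemma \cite{Zhang:rr0_III} writes $p=\sum_{h<\ell}s_h$ with $s_0\preceq s_1\preceq\dots\preceq s_{\ell-1}\preceq q$. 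Grouping the $s_h$ by residue modulo $n$ gives $r_0\preceq\dots\preceq r_{n-1}\preceq r_0+s_{\ell-1}$, hence $n$ orthogonal equivalent subprojections of $p$ of total trace $>\tau(a)-\e$. The resulting $*$-homomorphism $M_n\to\overline{aA^\cU a}$ is then lifted to order zero maps into $\overline{aAa}$.

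The difficulty you single out as the main obstacle is not the real one. The divisible elements must sit in hereditary subalgebras of $A$, not inside the finite-nuclear-dimension subalgebra $B$, which may be something like $M_k(C(Z))$ with no room to divide its unit evenly. Once $A$ is tracially almost divisible, the centralization is standard:
\begin{enumerate}
\item Winter's Lemma~5.11 \cite{Winter:pure} already gives c.p.c.\ order zero maps $\phi\colon M_n\to A$ that commute up to $\e$ in norm with a finite subset of $B$ and satisfy $\tau(\phi(1_n))>1-\e$.
\item Kirchberg--R{\o}rdam \cite{KR:central} upgrade these to unital $*$-homomorphisms $M_n\to A^\cU\cap B'$.
\item Since such $B$ are $\|\cdot\|_{2,T(A)}$-dense, a routine $\e$-test yields $M_n\to A^\cU\cap A'$.
\end{enumerate}
No unitary averaging is needed, and as written that part of your proposal is not an argument.

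In case (2), stable rank one is available, but only through Naryshkin's theorem \cite{naryshkin:urpc} that free minimal actions of FC groups have the URP and comparison, combined with Li--Niu. Your tracial castle approximation also needs exactly the URP, i.e.\ F{\o}lner castles whose remainder is small uniformly over $M_G(X)$. That uniformity is the content of Naryshkin's theorem, not a routine consequence of the FC structure; countable FC groups, e.g.\ infinite direct sums of finite non-abelian groups, need not be virtually abelian. You also need Niu's partition-of-unity construction \cite{niu:rc_mdim_urp} to get approximating algebras $\bigoplus_i M_{|S_i|}(C_0(Z_i))$ with $\dim Z_i<\infty$ when $X$ is infinite-dimensional.

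Finally, unital and non-elementary does not imply stable finiteness (think of $\mathcal{O}_2$); use that stable rank one does.
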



\Cref{thm_main:Vill_cp} is a corollary of the following more general result, which permits to establish uniform property $\Gamma$ for a vast class of nuclear $C^\ast$-algebras, under the assumption of stable rank one.

\begin{theoremintro}[\Cref{cor:sr1_tad_gamma}] \label{thm_main:sr1_gamma}
Let $A$ be a separable, simple, unital, non-elementary, stably finite $C^\ast$-algebra with stable rank one and tracially locally finite nuclear dimension (\Cref{def:loc_fin_nucdim}). Then $A$ has uniform property $\Gamma$.
\end{theoremintro}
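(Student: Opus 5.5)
The strategy follows the third route recalled in the introduction. That route is Winter's theorem \cite{Winter:pure}, as highlighted in \cite[Theorem 5.5]{CETW:gamma}: tracial almost divisibility (\Cref{def:tad}) together with locally finite nuclear dimension yields uniform property $\Gamma$ for simple $C^\ast$-algebras. The proof therefore splits into two independent parts.

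\textbf{(a) Stable rank one gives tracial almost divisibility.} We show that a separable, simple, unital, non-elementary, stably finite $A$ with stable rank one is tracially almost divisible. The plan is to work in the Cuntz semigroup. Stable rank one gives that $\mathrm{Cu}(A)$ has good structural properties, and by the work of Antoine--Perera--Robert--Thiel, stable rank one implies that $\mathrm{Cu}(A)$ is almost divisible in a tracial sense: for every $k$, every $[a]$ and every $[a']\ll[a]$ there is $[b]$ with $k[b]\le[a]$ and $[a']\le (k+1)[b]$. Translating this Cuntz-semigroup divisibility into the tracial statement of \Cref{def:tad} only requires comparing $d_\tau$ and $\tau$ on positive contractions, via functional calculus and $\e$-cutdowns. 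Non-elementarity guarantees that $A$ has no minimal projections, so the divisibility is non-degenerate.

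\textbf{(b) Tracially locally finite nuclear dimension suffices in Winter's argument.} We check that Winter's proof survives when locally finite nuclear dimension is weakened to its tracial version in \Cref{def:loc_fin_nucdim}. His argument constructs, inside the uniform tracial sequence algebra $A^\infty\cap A'$, approximately central tracially divisible positive elements. It does so by taking an approximating subalgebra $B$ of finite nuclear dimension, using order-zero decompositions of $B$, and using tracial almost divisibility of $A$ to cut these pieces into pieces of equal trace that almost commute with a prescribed finite set. Since uniform property $\Gamma$ is a statement about the trace-kernel quotient $A^\infty\cap A'$, errors small in $\|\cdot\|_{2,T(A)}$ are harmless. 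Hence it suffices that the finite set lies approximately in $B$ in the uniform $2$-norm, which is exactly what the tracial version provides. We would then obtain projections in $A^\infty\cap A'$ partitioning the unit into $n$ pieces of equal trace, by lifting order-zero maps from matrix cones and applying \cite[Proposition 2.3]{CETW:gamma}-type characterisations.

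\textbf{Expected main obstacle.} The crucial step is (b), specifically the uniformity over $T(A)$. Winter's local approximation uses norm closeness of the approximating subalgebra $B$, and traces on $B$ need not extend nicely to $A$. We must ensure that the dimension bound on $B$ controls the number of colours uniformly, so that one obtains a central partition of unity of order-zero pieces of comparable trace independent of the trace. The first thing to try is to run Winter's argument in the ultrapower, with all estimates taken in $\|\cdot\|_{2,T_\omega}$. Tracial divisibility from (a) should then be applied in $A_\omega$ to elements coming from $B_\omega$, using simplicity of $A$ to compare traces on images of $B$.
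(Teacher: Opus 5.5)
\textbf{The gap is step (a), which is the whole difficulty.} As far as is known, stable rank one alone does not make $\mathrm{Cu}(A)$ almost divisible; you cannot take this from Antoine--Perera--Robert--Thiel or from \cite{Thiel:ranks}. What those works provide for simple algebras of stable rank one is realization of ranks: every strictly positive lower semicontinuous affine function on the traces is $\tau\mapsto d_\tau(y)$ for some Cuntz class $y$. That gives you $y$ with $d_\tau(y)\approx d_\tau(a)/k$. But getting from there to $ky\le[a]$, i.e.\ fitting $k$ orthogonal copies of $y$ under $a$, needs strict comparison. Strict comparison is exactly what fails in the non-$\cZ$-stable examples the theorem targets, such as Villadsen algebras of the first type.

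Reading the second inequality only through $d_\tau$, as ``in a tracial sense'' suggests, does not help. Your translation is correct: by R{\o}rdam's lemma, $ky\le[a]$ gives an order zero map $M_k\to\overline{aAa}$ with trace about $k\,d_\tau(y)$. So either version of your claim is at least as strong as tracial almost divisibility itself. If (a) were available, Winter's theorem would already have given uniform property $\Gamma$ for Villadsen algebras of the first type, which have stable rank one and locally finite nuclear dimension. That is the open problem \cite[Problem XX]{STW:99problems} this paper settles. Tracial almost divisibility of stable rank one algebras is the new first assertion of \Cref{cor:sr1_tad_gamma}, and your proposal assumes it.

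\textbf{The missing idea is to divide without using any comparison in $A$, by passing to $A^\cU$.} By \cite[Theorem A]{Fu:sr1_rr0}, stable rank one forces $A^\cU$ to have real rank zero. The paper then argues as follows.
\begin{enumerate}
\item Pick a projection $p\in\overline{aA^\cU a}$ with $\tau(p)\ge\tau(a)-\e/2$.
\item Take a full $b\in A_+$ with uniformly small $d_\tau(b)$ (Glimm's lemma), and cut it down to a projection $q$ of uniformly small trace whose ideal still contains $p$.
\item Zhang's lemma \cite[Lemma 1.1]{Zhang:rr0_III} writes $p=\sum_h s_h$ with $s_0\preceq\cdots\preceq s_{\ell-1}\preceq q$. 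These are genuine Murray--von Neumann subequivalences coming from real rank zero, not from strict comparison.
\item Grouping the $s_h$ by residue mod $n$ gives $n$ nearly equal pieces. Subprojections equivalent to the smallest piece give $n$ orthogonal equivalent projections of total trace at least $\tau(a)-\e$ (\Cref{lemma:rr0_tad}).
\item The resulting $*$-homomorphism $M_n\to\overline{aA^\cU a}$ lifts to order zero maps into $\overline{aAa}$ (\Cref{prop:rr0_tad}).
\end{enumerate}

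\textbf{Part (b) is fine.} It is essentially \Cref{prop:gamma_tad}, and the obstacle you anticipate there does not arise. \cite[Lemma 5.11]{Winter:pure} applies directly to any $B\subseteq A$ with $\dim_{\mathrm{nuc}}(B)<\infty$, with traces taken in $T(A)$, and gives a unital $M_n\to A^\cU\cap B'$. Density of such $B$ in $\|\cdot\|_{2,T(A)}$ together with Kirchberg's $\e$-test then yields $M_n\to A^\cU\cap A'$.
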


We remark that, by \cite[Theorem A]{CETW:gamma}, the $C^\ast$-algebras in \Cref{thm_main:sr1_gamma} verify the Toms--Winter conjecture. This was already proved in \cite{Thiel:ranks} for $C^\ast$-algebras with stable rank one and locally finite nuclear dimension, and it is recovered here with a different proof using uniform property $\Gamma$.

The argument for \Cref{thm_main:sr1_gamma} is composed of three main steps. The first is contained in the recent preprint \cite{Fu:sr1_rr0}, where it is proved that separable, simple, unital $C^\ast$-algebras with stable rank one have  \emph{tracial approximate oscillation zero}, which is equivalent to say that their tracial ultrapowers have real rank zero. This work refines the earlier \cite{FL:sr1}, where the same statement is deduced under the additional assumption of strict comparison. 

 The second ingredient---a tracial variant of a result due to Zhang from \cite{Zhang:rr0}---is \Cref{prop:rr0_tad}, where it is shown that if $A$ is a simple, unital, non-elementary, stably finite $C^\ast$-algebra whose tracial ultrapower has real rank zero, then $A$ is tracially almost divisible.
 
Finally, the last ingredient, already mentioned in this introduction, is Winter's result from \cite{Winter:pure} stating that tracial almost divisibility, combined with locally finite nuclear dimension, implies uniform property $\Gamma$.
Winter's argument also applies to $C^\ast$-algebras that have \emph{tracially} locally finite nuclear dimension, namely those $C^\ast$-algebras for which every finite subset can be approximated, in the uniform tracial norm, by some subalgebra with finite nuclear dimension, bringing us back to the statement of \Cref{thm_main:sr1_gamma}.

We remark that it is not known whether nuclear $C^\ast$-algebras that do not have locally finite nuclear dimension exist, so the reduction to tracial locally finite nuclear dimension could potentially be---at least in the nuclear setting---inconsequential\footnote{Note however that $C^\ast$-algebras  that have {tracially} locally finite nuclear dimension need not be nuclear: take, for instance, the hyperfinite II$_1$ factor.}. Nevertheless, tracially locally finite nuclear dimension comes in handy when deducing the second part of \Cref{thm_main:Vill_cp} from \Cref{thm_main:sr1_gamma}.

Indeed, while the first part of \Cref{thm_main:Vill_cp} immediately follows from \Cref{thm_main:sr1_gamma}---AH-algebra clearly have locally finite nuclear dimension, being locally homogeneous, and Villadsen algebras of the first type are well-known to have stable rank one by \cite[Proposition 10]{Villadsen:1}---the author is not aware of a straightforward proof showing that crossed products of dynamical systems have locally finite nuclear dimension. Nevertheless, a technical result due to Niu from \cite{niu:rc_mdim_urp} (which we will review in detail in \Cref{prop:LTH_crossed}) shows that actions with the \emph{uniform Rokhlin property} (abbreviated \emph{URP}, see \Cref{def:urp}) generate crossed products that enjoy a tracial form of approximate homogeneity, from which tracially locally finite nuclear dimension immediately follows.

To finish the proof of the second part of \Cref{thm_main:Vill_cp}, one can finally rely on the  breakthrough paper \cite{LN:sr1} (see also \cite{BGK:sr1}), which establishes stable rank one for crossed products of free minimal topological dynamical system that have the URP and \emph{comparison} in the sense of \cite[Definition 3.2]{kerr:af}. The fact that the combination of these two conditions is automatic for actions as in \Cref{thm_main:Vill_cp} is but one of the many striking results obtained in \cite{naryshkin:urpc}.

The present version of this manuscript benefits from the recent \cite{Fu:sr1_rr0}, of which the author became aware only at a later stage of the project. In an earlier version, real rank zero of the tracial ultrapower was obtained via a generalization of an argument of topological flavor sketched in \cite[Example 3.5]{EN:propS} which does not use stable rank one, but relies instead on a tracial version of local homogeneity with locally controlled dimension growth, introduced in \Cref{def:LTH}. This local homogeneity with locally controlled dimension growth is enjoyed by certain AH-algebras (including Villadsen algebras of the first type, see \Cref{remark:Vill}) and by crossed products of free minimal topological dynamical systems with the URP, and provides an alternative route to uniform property $\Gamma$ which is presented in the second half of the paper (see \Cref{thm:LTH_rr0}, \Cref{thm:LTH_gamma}). We remark that this approach allows to deduce uniform property $\Gamma$ also for $C^\ast$-algebras which a priori do not have stable rank one (see \Cref{cor:AH_fdg}).

As a final observation, we emphasize that \Cref{cor:cp}, where we prove that free minimal topological dynamical systems with the URP generate crossed products with uniform property $\Gamma$, recovers with a new proof (but under the additional assumption of minimality) the theorem from \cite{KS:sbp} stating the same implication for free actions by amenable groups with the small boundary property. 

To better explain this, it is instructive to draw a parallel between Kerr and Szab\'o's proof and Winter's argument in \cite{Winter:pure} that tracial almost divisibility and locally finite nuclear dimension imply uniform property $\Gamma$. In both cases, one starts with some non-central version of tracial divisibility---tracial almost divisibility in Winter’s work, and the small boundary property in Kerr and Szab\'o’s---and upgrades it to an approximately central form of divisibility---\emph{almost finiteness in measure} in \cite{KS:sbp}---via a strong variant of amenability. In Winter's case the strong variant of amenability is  locally finite nuclear dimension, as opposed to nuclearity, while in \cite{KS:sbp} this comes from the amenability of the acting group, rather than merely that of the action, which alone would not imply a F{\o}lner-type condition like almost finiteness in measure.

In summary, \Cref{cor:cp} recovers \cite[Theorem 9.4]{KS:sbp} through Winter's method, crucially leveraging the small boundary property---or just the weaker URP---to obtain tracial almost divisibility and tracially locally finite nuclear dimension, while completely bypassing almost finiteness in measure (a similar approach, using comparison, can be found in \cite{niu:transformation, LN:sr1}).

\subsection*{Summary of the paper} Preliminaries are covered in \S\ref{S:preliminaries}. In \S\ref{S:rr0} we show that having a tracial ultrapower with real rank zero implies tracial almost divisibility for simple $C^\ast$-algebras, and thus uniform property $\Gamma$ if in addition tracially locally finite nuclear dimension is assumed. We then use this to prove \Cref{thm_main:sr1_gamma}.
In \S\ref{S:LTH} we introduce tracially LTH-algebra with locally flat dimension growth and we prove that they have, when simple, tracial ultrapowers with real rank zero, from which we deduce uniform property $\Gamma$. In \S\ref{S:LTH_examples} we put all pieces together and prove \Cref{thm_main:Vill_cp}.

\subsection*{Acknowledgements} I am particularly grateful to Jamie Bell for bringing \cite{Fu:sr1_rr0} to my attention, which led to the current form of the main results. I thank Zhuang Niu for some insightful remarks and for pointing out to me that some additional assumptions in a previous version of \Cref{thm:LTH_rr0} and \Cref{thm:LTH_gamma} could be removed. I wish to thank Chris Schafhauser, Aaron Tikuisis and Stuart White for the valuable feedback at the early stages of the project and on the first draft of this manuscript. The idea of interpreting crossed products as \emph{tracially} approximately homogeneous algebras, which  eventually led to \Cref{def:LTH}, first emerged after some conversations I had with Grigoris Kopsacheilis. I would like to thank him for those and other insightful discussions we shared. A large portion of this paper has been written during a visit to Universit\"at M\"unster in April 2026. I would like to express my gratitude to the institute and all the members of the operator algebras research group there for the warm welcome, the stimulating conversations and for the encouraging feedback on this work.

\section{Preliminaries} \label{S:preliminaries}
Let $A$ be a $C^\ast$-algebra. Denote by $A_1$, $A_\sa$ and $A_+$ the sets of contractions, self-adjoint elements, and positive elements of $A$, respectively. We also use the notation $A_{1, +}$ and $A_{1, \sa}$ to denote positive and self-adjoint contractions. We let $1_A$ denote the multiplicative unit of $A$, if $A$ has one. In case $A$ is equal to some matrix algebra $M_n$, we abbreviate $1_{M_n}$ as $1_n$. Given $a, b \in A$, we abbreviate the commutator $ab - ba$ with $[a,b]$.

Let $T(A)$ be the set of all tracial states---which we simply call \emph{traces}---endowed with the weak$^\ast$-topology induced by the dual of $A$. If $A$ is unital, then $T(A)$ is compact with this topology.

Given $\tau \in T(A)$, the definition of the seminorm induced by $\tau$ on $A$ is
\begin{equation}
\| a \|_{2, \tau} \coloneqq \tau(a^\ast a)^{\/2}, \quad a \in A,
\end{equation}
and
\begin{equation}
\| a \|_{2, T(A)} \coloneqq \sup_{\tau \in T(A)} \| a \|_{2, \tau}.
\end{equation}
We recall that
\begin{equation}
\| ab \|_{2, T(A)} \le \| a \| \|b \|_{2, T(A)}, \quad a,b \in A.
\end{equation}
We will often use this inequality without mention.

Given $a \in A$, recall that the smallest ideal of $A$ containing $a$ is $\overline{AaA}$, and that the smallest hereditary subalgebra of $A$ containing $a$ is $\overline{aAa}$.

Recall finally that a unital $C^\ast$-algebra $A$ has \emph{stable rank one} if the set of invertible elements in $A$ is dense in $A$, and it has \emph{real rank zero} if the set of invertible elements of $A_\sa$ is dense in $A_\sa$. Among the many well-known equivalent characterizations of real rank zero, we will regulary use without mention the one stating that every hereditary subalgebra of $A$ has an approximate unit of projections \cite[Theorem 2.6]{BP:rr0}.

\subsection{Tracial ultrapower and uniform property $\Gamma$}
Fix, for the rest of this paper, a non-principal ultrafilter $\cU$ over $\bbN$.

Let $A$ be a $C^\ast$-algebra such that $T(A) \ne \emptyset$. Its \emph{tracial ultrapower} is the quotient
\begin{equation}
A^\cU \coloneqq \ell^\infty(A) /J_{T(A)},
\end{equation}
where $J_{T(A)}$ is the \emph{trace-kernel ideal}, namely
\begin{equation}
J_{T(A)} \coloneqq \left\{ (a_n)_{n=1}^\infty \in A^\cU : \lim_{n \to \cU} \| a_n \|_{2, T(A)} = 0 \right\}.
\end{equation}
Similarly to how we just did in the definition of $J_{T(A)}$, will regularly identify sequences in $\ell^\infty(A)$ with their equivalence classes in $A^\cU$. We recall that $A^\cU$ is unital if and only if $T(A)$ is compact, which is always the case when $A$ itself is unital (see \cite[Proposition 1.11]{CETWW}).

The map sending each $a \in A$ to the constant sequence in $A^\cU$ with value $a$ is an embedding whenever $\| \cdot \|_{2, T(A)}$ is a norm---e.g. if $A$ is simple. We shall regularly identify $A$ with this canonical copy of it in $A^\cU$.

Every sequence $ \bar \tau \coloneqq (\tau_n)_{n=1}^\infty$ of traces on $A$ induces a trace on $A^\cU$ by setting
\begin{equation}
\bar \tau((a_n)_{n=1}^\infty) \coloneqq \lim_{n \to \cU} \tau_n(a_n), \quad (a_n)_{n=1}^\infty \in A^\cU.
\end{equation}
We call traces of this form \emph{limit traces}. Denote by $T_\cU(A) \subseteq T(A^\cU)$ the weak$^\ast$-closed convex hull of all limit traces in $T(A^\cU)$.

Finally, given a subset $S \subseteq A^\cU$, we denote by $A^\cU \cap S'$ the commutant of $S$ in $A^\cU$.

\begin{definition}[{\cite[Definition 2.1]{CETWW}}] \label{def:gamma}
Let $A$ be a separable $C^\ast$-algebra such that $T(A)$ is non-empty and compact. We say that $A$ has \emph{uniform property $\mathit{\Gamma}$} if, for every $n \ge 1$, there are pairwise orthogonal projections $p_0, \dots, p_{n-1} \in A^\cU \cap A'$ such that $\sum_{i < n} p_i = 1_{A^\cU}$ and
\begin{equation}
\tau(ap_i) = \frac{1}{n} \tau(a), \quad i < n, \  a \in A, \ \tau \in T_\cU(A).
\end{equation}
\end{definition}

\begin{definition}[{\cite[Definition 4.2]{CETW:gamma}}] \label{def:mcduff}
Let $A$ be a separable $C^\ast$-algebra such that $T(A)$ is non-empty and compact. We say that $A$ is \emph{uniformly McDuff} if, for every $n \ge 1$, there is a unital $*$-homomorphism $\Phi \colon M_n \to A^\cU \cap A'$.
\end{definition}

Uniform McDuffness is formally stronger than uniform property $\Gamma$, but the two conditions are equivalent in the nuclear setting (see \cite[Proposition 2.3]{CETWW} and \cite[Theorem 4.6]{CETW:gamma} for details).

\subsection{Cuntz subequivalence and dimension functions} We assume some familiarity with the basics of Cuntz subequivalence (see e.g. \cite{thiel:cuntz}).

Let $A$ be a $C^\ast$-algebra. Given $a, b \in A_+$, we say that $a$ is \emph{Cuntz-subequivalent} to $b$, $a \preceq b$ in symbols, if and only if there is a sequence $(x_n)_{n=1}^\infty$ of elements in $A$ such that $\lim_{n \to \infty} \| x_n b x_n^\ast - a \| = 0$. We say that $a$ and $b$ are \emph{Cuntz-equivalent}, $a\sim b$ in symbols, if $a \preceq b$ and $b \preceq a$. Recall that if $p,q \in A$ are projections, then $p$ is Cuntz-subequivalent (equivalent) to $q$ if and only if $p$ is subequivalent (equivalent) to $q$, in the sense that there is a partial isometry $v \in A$ such that $p = v^\ast v$ and $vv^\ast \le q$ ($vv^\ast = q$).

If $ \tau \in T(A)$, define the \emph{dimension function} associated to $\tau$ as
\begin{equation}
d_\tau(a) \coloneqq \lim_{n \to \infty} \tau(a^{1/n}), \quad a \in A_+.
\end{equation}
 Recall finally that $a \preceq b$ implies $d_\tau(a) \le d_\tau(b)$.

\subsection{Tracial almost divisibility and finite nuclear dimension}
Given two $C^\ast$-algebras $A$ and $B$, a {completely positive} (c.p.) map $\phi \colon A \to B$ is \emph{order zero} if it preserves orthogonality of positive elements, that is $\phi(a)\phi(b) = 0$ whenever $a,b \in A_+$ are such that $ab = 0$.

The notion of tracial almost divisibility appeared first in \cite[Definition 3.5]{Winter:pure} as a tracial counterpart to the more well-known almost divisibility. We report the definition below, modifying it slightly to also express tracial almost divisibility for single positive contractions, which will be useful in the next section.

\begin{definition}[{cf. \cite[Definition 3.5]{Winter:pure}}] \label{def:tad}
Let $A$ be a unital $C^\ast$-algebra such that $T(A) \ne \emptyset$. We say that $a \in A_{1,+}$ is \emph{tracially almost divisibile in $A$} if, for every $n \ge 1$ and $\e > 0$, there is a completely positive contractive (c.p.c.) order zero map
\begin{equation}
\phi \colon M_n \to \overline{a A a},
\end{equation}
such that $\tau(\phi(1_n)) \ge \tau(a) - \e$ for all $\tau \in T(A)$.

A unital tracial $C^\ast$-algebra $A$ is \emph{tracially almost divisible} if every $a \in M_m(A)_{1,+}$ is tracially almost divisible in $M_m(A)$, for all $m \in \bbN$.\footnote{The original definition of tracial almost divisibility in \cite[Definition 3.5]{Winter:pure} is stated in terms of quasitraces. We avoid mentioning quasitraces here since we will mainlt interested in unital nuclear $C^\ast$-algebras, for which it is known that all 2-quasitraces are traces, by the celebrated \cite{haagerup:quasi}.}
\end{definition}

\begin{remark} \label{remark:matrix}
Recall that the existence of a c.p.c. order zero map $\phi \colon M_n \to A$ is equivalent to the existence of pairwise orthogonal elements $a_0, \dots, a_{n-1} \in A_{1,+}$ for which there are $x_0, \dots, x_{n-1} \in A_1$ such that $x_i^\ast x_i = a_i$ and $x_i x_i^\ast = a_1$, for all $i < n$ (see e.g. \cite[Propositions 2.3, 2.4]{RW:revisited}). It is moreover well-known that the existence of a $*$-homomorphism $\Phi \colon M_n \to A$ is equivalent to the existence of pairwise orthogonal equivalent projections $p_0, \dots, p_{n-1} \in A$.
\end{remark}

The \emph{nuclear dimension} of a $C^\ast$-algebra $A$ (see  \cite{WZ:nuc_dim}) is the smallest integer $n \in \bbN$, denoted $\dim_{\mathrm{nuc}}(A)$, such that there is a net $(F_i, \psi_i, \phi_i)_{i\in I}$ where each $F_i$ is a finite-dimensional $C^\ast$-algebra, $\psi_i \colon A \to F_i$ is a c.p.c. map, $\phi_i \colon F_i \to A$ is a c.p map which is sum of at most $n+1$ c.p.c.   order zero maps, and the net $(\| \phi_i(\psi_i(a)) - a \|)_{i \in I}$ converges to zero. If no such integer exists we say that $A$ has infinite nuclear dimension. 

\begin{definition}[{cf. \cite[Definition 4.1]{Winter:pure}}] \label{def:loc_fin_nucdim}
A $C^\ast$-algebra has \emph{locally finite nuclear dimension} if for every finite subset $F \subset A$ and $\e > 0$ there exists a $C^\ast$-subalgebra $B \subseteq A$ such that $\dim_{\mathrm{nuc}}(B) < \infty$ and that for every $a \in F$ there is $b \in B$ such that $\| a - b \| < \e$.

A $C^\ast$-algebra with non-empty trace space has \emph{tracially locally finite nuclear dimension} if for every finite subset $F \subset A_1$ and $\e > 0$ there exists a $C^\ast$-subalgebra $B \subseteq A$ such that $\dim_{\mathrm{nuc}}(B) < \infty$ and that for every $a \in F$ there is $b \in B_1$ such that $\| a - b \|_{2, T(A)} < \e$.
\end{definition}

The following proposition, crucially relying on \cite[Lemma 5.11]{Winter:pure}, will be our main gateway to uniform property $\Gamma$.

\begin{proposition}[{cf. \cite[Lemma 5.11]{Winter:pure}}] \label{prop:gamma_tad}
Let $A$ be a separable, simple, unital, non-elementary $C^\ast$-algebra such that $T(A) \ne \emptyset$. Consider:
\begin{enumerate}
\item \label{item1:gamma_tad} $A$ is uniformly McDuff.
\item \label{item2:gamma_tad} $A$ is tracially almost divisible.
\end{enumerate}
Then \ref{item1:gamma_tad} $\Rightarrow$ \ref{item2:gamma_tad}. If $A$ has tracially locally finite nuclear dimension, then \ref{item2:gamma_tad} $\Rightarrow$ \ref{item1:gamma_tad}.
\end{proposition}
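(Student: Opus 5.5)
For \ref{item1:gamma_tad} $\Rightarrow$ \ref{item2:gamma_tad}, I will argue directly from the definitions. Fix $m \ge 1$ and $a \in M_m(A)_{1,+}$. First note that uniform McDuffness passes to $M_m(A)$: a unital $*$-homomorphism $M_n \to A^\cU \cap A'$ tensored with the identity of $M_m$ gives one into $M_m(A)^\cU \cap M_m(A)'$. Traces on $M_m(A)$ are normalized traces $\tau \otimes \mathrm{tr}_m$, so $\|\cdot\|_{2,T(M_m(A))}$ is controlled by $\|\cdot\|_{2,T(A)}$ entrywise. So I may assume $m=1$.

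Take $\Phi \colon M_n \to A^\cU \cap A'$ unital, and define
\[
\psi(x) \coloneqq a^{1/2}\Phi(x)a^{1/2}.
\]
Since $\Phi(x)$ commutes with $a$, $\psi$ is c.p.c. order zero into $\overline{a A^\cU a}$. It satisfies $\psi(1_n)=a$, so $\bar\tau(\psi(1_n)) = \tau(a)$ for every limit trace.

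Next I lift $\psi$. Order zero maps from $M_n$ are projective in the sense of cones over $M_n$, so I get a sequence of c.p.c. order zero maps $\psi_k \colon M_n \to A$ representing $\psi$. To land in $\overline{aAa}$, I compress: replace $\psi_k$ by $g_\delta(a)\psi_k(\cdot) g_\delta(a)$, where $g_\delta$ is a continuous function equal to $1$ on $[\delta,1]$, or use $a^{1/2}\psi_k a^{1/2}$ directly. Compressing by $a^{1/2}$ on both sides of an order zero map that asymptotically commutes with $a$ is only asymptotically order zero. I will fix this using stability of order zero maps from $M_n$ (relations are weakly stable), perturbing in norm. That gives exact order zero maps into $\overline{aAa}$ whose traces satisfy $\tau(\phi_k(1_n)) \to \tau(a)$ uniformly along $\cU$. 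Uniformity in $\tau$ holds because the convergence is in $\|\cdot\|_{2,T(A)}$ and $\tau$ is $\|\cdot\|_{2,T(A)}$-continuous on contractions. The loss of at most $\varepsilon$ tolerated in \Cref{def:tad} absorbs all the perturbation errors.

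For \ref{item2:gamma_tad} $\Rightarrow$ \ref{item1:gamma_tad} under tracially locally finite nuclear dimension, I follow Winter's \cite[Lemma 5.11]{Winter:pure}. Winter shows that for a subalgebra $B$ of finite nuclear dimension inside a tracially almost divisible simple algebra, one can produce c.p.c. order zero maps $M_n \to A$ that approximately commute with a finite subset of $B$ and have trace close to $1$. The mechanism is as follows. Use the decomposition of $B$ into $d+1$ order zero pieces from finite-dimensional algebras $F^{(j)}$. Divide each image of a minimal projection of $F^{(j)}$ tracially via \ref{item2:gamma_tad}. Then assemble the pieces through the order zero maps, so the resulting $M_n$-map commutes with the image of $F^{(j)}$. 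Adding $d+1$ such maps only yields trace $\ge 1/(d+1)$. Winter's iteration on the complementary part, using divisibility again, pushes the trace to $1-\varepsilon$.

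Applying this with $B$ approximating a finite set $F\subset A_1$ in $\|\cdot\|_{2,T(A)}$ gives order zero maps commuting with $F$ up to a $2$-norm error. A diagonal sequence argument over an increasing exhaustion of a dense sequence in $A$ and $\varepsilon\to0$ then yields a c.p.c. order zero $\Phi\colon M_n\to A^\cU\cap A'$ with $\bar\tau(\Phi(1_n))=1$ for all limit traces. Hence $1-\Phi(1_n)\in J_{T(A)}$ modulo the closed convex hull, so $\Phi(1_n)=1$ in $A^\cU$ once we know that $\|\cdot\|_{2,T(A)}$ is detected by limit traces; this is a standard fact. An order zero map with unital image of $1_n$ is a $*$-homomorphism, giving uniform McDuffness.

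The main obstacle is the second direction. There I must check that Winter's argument, written for norm approximations by finite nuclear dimension subalgebras, goes through with only $2$-norm approximations. The tracially almost divisible elements that are divided live in $M_m(A)$, coming from $B$ rather than from $\overline{aAa}$ of $B$. This should be harmless, since only tracial estimates and approximate commutation in $\|\cdot\|_{2,T(A)}$ are needed in $A^\cU$. The inequality $\|xy\|_{2}\le\|x\|\|y\|_2$ propagates the errors, because Winter's maps are contractive.
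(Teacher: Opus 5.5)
There is a gap in your proof of (1) $\Rightarrow$ (2), at the step where you push the lift into $\overline{aAa}$. Your lifts $\psi_k$ of $\psi=a^{1/2}\Phi(\cdot)a^{1/2}$ commute with $a$ (and with $g_\delta(a)$) only modulo the trace-kernel ideal. That is, $\lim_{k\to\cU}\|[\psi_k(x),g_\delta(a)]\|_{2,T(A)}=0$, and nothing makes these commutators small in operator norm. So the compressed maps $g_\delta(a)\psi_k(\cdot)g_\delta(a)$ satisfy the order zero relations only approximately in $\|\cdot\|_{2,T(A)}$. Weak stability of those relations concerns \emph{norm}-approximate solutions, so it cannot be used to perturb these maps, in norm, to exact order zero maps. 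Separately, your alternative $a^{1/2}\psi_k(\cdot)a^{1/2}$ represents $a^{2}\Phi(\cdot)$ in $A^\cU$. Its value at $1_n$ has trace $\tau(a^2)$, which in general is not within $\e$ of $\tau(a)$.

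The repair uses the projectivity you already invoked, but along the right surjection and with no compression. The quotient map $\ell^\infty(A)\to A^\cU$, restricted to $\overline{a\ell^\infty(A)a}$, maps onto $\overline{aA^\cU a}$, and $\overline{a\ell^\infty(A)a}\subseteq\ell^\infty(\overline{aAa})$. Lift $\psi$ through this surjection using projectivity of $C_0((0,1])\otimes M_n$. This gives c.p.c.\ order zero maps $\phi_k\colon M_n\to\overline{aAa}$ with $\lim_{k\to\cU}\|\phi_k(1_n)-a\|_{2,T(A)}=0$. Hence $\tau(\phi_k(1_n))>\tau(a)-\e$ for all $\tau\in T(A)$, for $\cU$-many $k$; this is exactly the paper's argument.

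Your (2) $\Rightarrow$ (1) is essentially the paper's: Winter's Lemma 5.11 as a black box, followed by $2$-norm approximation. Your explicit diagonal argument replaces the paper's appeal to Kirchberg--R{\o}rdam's Lemma 7.6 and Kirchberg's $\e$-test, and that is fine. The identity $\Phi(1_n)=1$ even follows directly from $\|1-\phi_k(1_n)\|_{2,\tau}^2\le\tau(1-\phi_k(1_n))$.

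The ``main obstacle'' you flag does not arise. Winter's lemma is only applied to finite subsets of $B$, with norm estimates. The $2$-norm error enters afterwards, exactly as in your own preceding paragraph. Your informal description of Winter's mechanism (adding $d+1$ maps) should not be taken literally, since a sum of order zero maps need not be order zero, but nothing in the proof depends on it.
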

\begin{proof}
\ref{item1:gamma_tad} $\Rightarrow$ \ref{item2:gamma_tad}. Since $M_m(A)^\cU \cong M_m(A^\cU)$ for every $m \in \bbN$, it follows that $A$ is uniformly McDuff if and only if $M_m(A)$ is, so it suffices to show that every element in $A_{1,+}$ is tracially almost divisibile.

Fix  then $a \in A_{1,+}$, $\e > 0$ and $n \ge 1$. Let $\Phi \colon M_n \to A^\cU \cap A'$ be a unital $*$-homomorphism. It then follows that the map
\begin{equation}
\begin{split}
\phi \colon M_n &\to \overline{aA^\cU a} \\
c &\mapsto a^{1/2} \Phi(c)a^{1/2}
\end{split}
\end{equation}
is a c.p.c. order zero map.

Let $\Pi \colon \overline{a \ell^\infty(A) a} \to \overline{aA^\cU a}$ denote the restriction of the quotient map $\ell^\infty(A) \to A^\cU$. As $\Pi$ is surjective and since $\overline{a \ell^\infty(A) a} = \ell^\infty(\overline{aAa})$, by \cite[Proposition 1.2.4]{Winter:cd_II} there is a sequence $(\phi_m)_{m=1}^\infty$ of c.p.c order zero maps from $M_n$ into $\overline{a A a}$ such that $\Pi \circ (\phi_m)_{m=1}^\infty = \phi$, where $(\phi_m)_{m=1}^\infty$ is interpreted as a map from $M_n$ into $\ell^\infty(A)$. 

Since, by definition, $\phi(1_n) = a$, there is $m \in \bbN$ such that $\tau(\phi_m(1_n)) >  \tau(a) - \e$ for all $\tau \in T(A)$.

\ref{item2:gamma_tad} $\Rightarrow$ \ref{item1:gamma_tad}. By \cite[Lemma 5.11]{Winter:pure}, for every $C^\ast$-subalgebra $B \subseteq A$ such that $\dim_{\mathrm{nuc}}(B) < \infty$, every finite subset $F \subset B$, $\e > 0$ and $n \ge 1$, there exists a c.p.c. order zero map $\phi \colon M_n \to A$ such that
\begin{equation}
\| [\phi(c), b] \| < \e \| c \|, \text{ and } \tau(\phi(1_n)) > 1 - \e, \quad c \in M_n, b \in F, \tau \in T(A).
\end{equation}

Repeating this argument for larger and larger subsets $F$ and smaller and smaller constants $\e$, by \cite[Lemma 7.6]{KR:central} one can obtain a unital $*$-homomorphism $\Phi \colon M_n \to A^\cU \cap B'$. Since $A$ has tracially locally finite nuclear dimension, the union of all $C^\ast$-subalgebras of $A$ with finite nuclear dimension of $A$ is dense, with respect to $\| \cdot \|_{2, T(A)}$, in $A$. By a standard application of Kirchberg's $\e$-test, there exists of a unital $*$-homomorphism $\Psi \colon M_n \to A^\cU \cap A'$ (see \cite[Proposition 5.23]{CCEGSTW} for the same argument in the context of tracially complete $C^\ast$-algebra), hence $A$ is uniformly McDuff.
\end{proof}

\section{Tracial ultrapowers with real rank zero and tracial almost divisibility} \label{S:rr0}
In this section we  show that simple, unital, non-elementary $C^\ast$-algebras whose tracial ultrapower has real rank zero are tracially almost divisible (see \Cref{def:tad}). This part is inspired by \cite[Theorem 1.1]{Zhang:rr0}, which proves a non-tracial analogue of this implication (see also \cite[Theorem 2.4]{KW:ample} for a similar statement for diagonal pairs). We then use this result to deduce \Cref{thm_main:sr1_gamma}, in \Cref{cor:sr1_tad_gamma}.

\begin{lemma} \label{lemma:rr0_tad}
Let $A$ be a unital $C^\ast$-algebra with real rank zero and such that $T(A) \ne \emptyset$. Let $a \in A_{1,+}$ and suppose that, for every $\delta > 0$ there is $b \in A_+$ such that $a \in \overline{AbA}$, and that
\begin{equation}
d_\tau(b) < \delta, \quad \tau\in T(A).
\end{equation}
Then, for every $n \ge 1$ and $\e > 0$, there exists a $*$-homomorphism $\Phi \colon M_n \to \overline{aAa}$,
such that
\begin{equation}
\tau(\Phi(1_n)) \ge \tau(a) - \e, \quad \tau \in T(A).
\end{equation}
In particular, $a$ is tracially almost divisible in $A$.
\end{lemma}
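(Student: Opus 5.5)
The plan is to first replace $a$ by a projection, then split that projection into $n$ mutually equivalent orthogonal pieces plus a remainder of uniformly small trace, using the hypothesis on $b$.

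\emph{Step 1: reduce to a projection.} Since $A$ has real rank zero, $\overline{aAa}$ has an approximate unit of projections $(e_k)_k$. Then $\|a - e_k a\| \to 0$. Since $0 \le a \le 1$, we have $e_k a e_k \le e_k$, and so
\[
\tau(e_k) \ge \tau(e_k a e_k) = \tau(e_k a) \ge \tau(a) - \|a - e_k a\|.
\]
Hence we may fix a projection $p \in \overline{aAa}$ with $\tau(p) \ge \tau(a) - \e/2$ for all $\tau \in T(A)$. It then suffices to find a $*$-homomorphism $\Phi \colon M_n \to pAp \subseteq \overline{aAa}$ with $\tau(\Phi(1_n)) \ge \tau(p) - \e/2$ uniformly. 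By \Cref{remark:matrix}, this amounts to finding pairwise orthogonal, mutually equivalent projections $r_0, \dots, r_{n-1} \le p$ whose sum $r$ satisfies $\tau(p - r) < \e/2$ for all $\tau \in T(A)$.

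\emph{Step 2: chop $p$ into small pieces.} Fix $\delta$ much smaller than $\e/n$ and take $b$ as in the hypothesis. Since $p \in \overline{AaA} \subseteq \overline{AbA}$, and $\overline{bAb}$ has an approximate unit of projections, we can find a projection $q \in \overline{bAb}$ with $p \in \overline{AqA}$. Here we use that the ideal generated by $b$ is the closed union of the ideals generated by these projections, and that $p$ lies in such an ideal as soon as it lies within distance $<1$ of it. Moreover $q \preceq b$, so $\tau(q) \le d_\tau(b) < \delta$ for all $\tau$. As $\overline{AqA}$ contains the projection $p$, we get $p \preceq q \otimes 1_k$ for some $k$. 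By Zhang's Riesz decomposition for real rank zero algebras, we can then write $p = \sum_{j<k} p_j$ with orthogonal projections $p_j \preceq q$. In particular, every projection $\le p_j$ has trace $< \delta$ under every $\tau$.

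\emph{Step 3: divide using small pieces (main obstacle).} The pieces $p_j$ are small but need not be mutually equivalent, and no comparison is assumed. So the division has to be carried out internally, following Zhang's proof in \cite{Zhang:rr0}. I would proceed by induction on $n$, where it suffices to treat $n=2$ repeatedly, i.e.\ halving with small error. The tool is the real rank zero dichotomy: if $e, f$ are orthogonal projections with $eAf \ne 0$, then there are nonzero equivalent subprojections $e' \le e$ and $f' \le f$. Cutting $x \in eAf$ by spectral projections of $x^*x$ and $xx^*$ yields these.

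Starting from the decomposition $p = \sum_j p_j$, I would distribute the pieces greedily into two groups of nearly equal trace. I would then repeatedly extract equivalent subprojections from the two sides, keeping the extracted parts orthogonal and equivalent, until the unmatched remainders $e, f$ satisfy $eAf = 0$ (or until they are small). Simplicity of $A$ is what forbids $eAf = 0$ for nonzero $e, f$. Combining this with the hypothesis at every scale $\delta$, the process can be stopped after finitely many steps once the unmatched remainder has trace $< \e/2$ uniformly. This uniformity in $\tau$ is exactly the step where I expect real difficulty: I would control it by working with traces of pieces bounded by $\delta$, so that each imbalance is at most $\delta$ uniformly, and by accumulating at most $k\delta$, with $k$ controlled via choosing $\delta$ after $n$ but exploiting that the error only depends on the boundary pieces. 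The final statement, that $a$ is tracially almost divisible, follows because a $*$-homomorphism is in particular a c.p.c.\ order zero map.
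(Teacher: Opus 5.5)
Your Steps 1 and 2 are sound and match the paper's opening: the same projection $p\in\overline{aAa}$ with $\tau(p)\ge\tau(a)-\e/2$, and the same small projection $q\in\overline{bAb}$ with $p\in\overline{AqA}$ and $\tau(q)<\delta$.

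The gap is Step 3, which as written is not an argument.
\begin{enumerate}
\item You use simplicity to rule out $eAf=0$. The lemma does not assume $A$ simple; the hypothesis $a\in\overline{AbA}$ is there as a substitute. Moreover, it is applied in \Cref{prop:rr0_tad} to $A^\cU$, which is in general not simple.
\item Even granting simplicity, the real rank zero dichotomy only gives \emph{some} nonzero equivalent subprojections, with no lower bound on their traces. So the extraction need not push the unmatched part below $\e/2$ in finitely many steps, and no exhaustion argument is available in a $C^\ast$-algebra.
\item The greedy split cannot be made uniform in $\tau$. The pieces $p_j$ need not be comparable, so the affine functions $\tau\mapsto\tau(p_j)$ on $T(A)$ can be ordered differently by different traces. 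A partition balancing one trace can then be badly unbalanced for another.
\item Your fallback bound $k\delta$ is vacuous. From $\tau(p)\le\sum_j\tau(p_j)<k\delta$ we get $k\delta>\tau(p)$, and $k>\tau(p)/\delta$ grows as $\delta\to0$. So you cannot choose $\delta$ after $k$.
\end{enumerate}

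The missing idea is that real rank zero gives more than pieces individually below $q$. By \cite[Lemma 1.1]{Zhang:rr0_III}, one can write $p=\sum_{h<\ell}s_h$ with orthogonal projections forming a \emph{chain}
\[
s_0\preceq s_1\preceq\dots\preceq s_{\ell-1}\preceq q.
\]
This total ordering is what makes division uniform in $\tau$, without comparison or simplicity. Pad with zeros so that $\ell=n\ell_0$ and set $r_i=\sum_{h\equiv i \bmod n}s_h$. Then
\[
r_0\preceq r_1\preceq\dots\preceq r_{n-1}\preceq r_0+s_{\ell-1},
\]
hence $\tau(r_0)\le\tau(p)/n\le\tau(r_{n-1})\le\tau(r_0)+\tau(q)$ for all $\tau$ simultaneously. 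The error is a single piece, not $k$ of them. Now choose $\delta=\e/(2n)$ and subprojections $p_i\le r_i$ with $p_i\sim r_0$. These are $n$ orthogonal, mutually equivalent projections in $pAp$ with $\tau\bigl(\sum_ip_i\bigr)=n\tau(r_0)>\tau(p)-\e/2$. This is exactly the paper's argument, and it needs no halving, induction, or extraction procedure.
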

\begin{proof}
Fix $a \in A_{1,+}$, $\e > 0$ and $n \ge 1$. By real rank zero of $A$, the hereditary subalgebra $\overline{aAa}$ admits an approximate unit of projections, so there is a projection $p \in \overline{aAa}$ such that $\| pap - a \| < \e/2$, which in particular entails
\begin{equation} \label{eq:trace_p}
\tau(p) \ge \tau(pap) \ge  \tau(a) - \e/2, \quad \tau \in T(A).
\end{equation}

Find, using the hypothesis, some $b \in A_+$ such that $a \in \overline{AbA}$ and that
\begin{equation} \label{eq:small_b}
d_\tau(b) < \frac{\e}{2n}, \quad \tau \in T(A).
\end{equation}
As $p \in \overline{aAa} \subseteq \overline{AbA}$, there are finitely many $x_j, y_j \in A$ such that
\begin{equation}
\left\| \sum\nolimits_{j < m} x_j b y_j - p \right\| < 1.
\end{equation}
Since $\overline{b A b}$ has real rank zero, there is a projection $q \in \overline{b A b}$ such that $\| qb - b \|$ is small enough that
\begin{equation} \label{eq:p_q}
\left\| \sum\nolimits_{j < m} x_j q b y_j - p \right\| < 1.
\end{equation}
Notice that $q \in \overline{b Ab}$ implies $q \preceq b$ (see e.g. \cite[Theorem 2.43]{thiel:cuntz}), therefore
\begin{equation} \label{eq:small_q}
\tau(q) = d_\tau(q) \le d_\tau(b) \stackrel{\eqref{eq:small_b}}{<} \frac{\e}{2n}, \quad \tau \in T(A).
\end{equation}

The inequality in \eqref{eq:p_q} entails that $\sum_{j < m} x_j q b y_j$ is invertible in $p A p$, which in turn implies that $p$ is in the ideal generated by $q$ in $A$. It therefore follows, by \cite[Lemma 1.1]{Zhang:rr0_III}, that there are $\ell \in \bbN$ and orthogonal projections $s_0, \dots, s_{\ell-1} \in pAp$ such that
\begin{equation} \label{eq:si}
p = \sum\nolimits_{h < \ell} s_h \quad   s_0 \preceq s_1 \preceq \dots \preceq s_{\ell-1} \preceq q.
\end{equation}

Without loss of generality, we can assume that $\ell = n \ell_0$ for some $\ell_0 \in \bbN$---add otherwise a finite number of projections equal to zero at the beginning of the sequence $s_0, \dots, s_{\ell -1}$. Define next
\begin{equation}
r_i \coloneqq \sum\nolimits_{i \equiv k \ \mathrm{mod}(n)}  s_k, \quad i < n.
\end{equation}
It follows by \eqref{eq:si} that
\begin{equation}
p = \sum\nolimits_{i < n}r_i, \quad r_0 \preceq r_1 \preceq \dots \preceq r_{n-1} \preceq r_0 + s_{n \ell_0 - 1}.
\end{equation}
These relations imply, using $s_{n\ell_0 -1} \preceq q$, that
\begin{equation}
\tau(r_0) \le \frac1n \tau(p) \le \tau(r_{n-1}) \stackrel{\eqref{eq:small_q}}{<} \tau(r_0) + \frac{\e}{2n}, \quad \tau \in T(A),
\end{equation}
hence in particular
\begin{equation} \label{eq:1n_taup}
\left| \tau(r_0) - \frac1n \tau(p) \right| < \frac{\e}{2n}, \quad \tau \in T(A).
\end{equation}

For every $i < n$, let $p_i$ be a subprojection of $r_i$ equivalent to $r_0$. The  projections $p_0, \dots, p_{n-1} \in pAp \subseteq \overline{aAa}$ are pairwise orthogonal and equivalent, and moreover staisfy the inequalities below
\begin{equation}
\tau\left( \sum\nolimits_{i < n} p_i \right) \stackrel{\eqref{eq:1n_taup}}{>} \tau(p) - \frac{\e}{2} \stackrel{\eqref{eq:trace_p}}{\ge} \tau(a) - \e, \quad \tau \in T(A).
\end{equation}
This proves the lemma and shows that $a$ is tracially almost divisible in $A$, by \Cref{remark:matrix}.
\end{proof}

\begin{proposition} \label{prop:rr0_tad}
Let $A$ be a simple, unital, non-elementary $C^\ast$-algebra such that $T(A) \ne \emptyset$. If $A^\cU$ has real rank zero, then $A$ is tracially almost divisible.
\end{proposition}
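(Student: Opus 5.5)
The plan is to apply \Cref{lemma:rr0_tad} inside $A^\cU$ (and inside its matrix amplifications) to elements of $A$ itself, and then to lift the resulting $*$-homomorphism back to $A$. The lifting uses projectivity of order zero maps, exactly as in the proof of \ref{item1:gamma_tad} $\Rightarrow$ \ref{item2:gamma_tad} of \Cref{prop:gamma_tad}.

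\emph{Reduction to $m=1$.} We have $M_m(A)^\cU \cong M_m(A^\cU)$, and real rank zero passes to matrix algebras. Moreover $M_m(A)$ is again simple, unital and non-elementary with $T(M_m(A)) \ne \emptyset$. So it suffices to show that every $a \in A_{1,+}$ is tracially almost divisible in $A$.

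\emph{Verifying the hypothesis of \Cref{lemma:rr0_tad} in $A^\cU$.} Fix $\delta>0$ and choose $N$ with $1/N<\delta$. Since $A$ is simple and non-elementary, a Glimm-type lemma gives nonzero pairwise orthogonal elements $b_0,\dots,b_{N-1} \in A_+$ that are pairwise Cuntz equivalent. For instance, one can take a nonzero c.p.c.\ order zero map $M_N \to A$ and set $b_i$ to be the images of the diagonal matrix units. Put $b \coloneqq b_0$. These relations are preserved in $A^\cU$, because $A$ embeds in $A^\cU$ by simplicity. Hence every $\tau \in T(A^\cU)$ satisfies
\begin{equation*}
N d_\tau(b) = d_\tau\left(\sum\nolimits_{i<N} b_i\right) \le 1,
\end{equation*}
so $d_\tau(b) < \delta$. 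Since $A$ is simple and unital, there are finitely many $x_j,y_j \in A$ with $\sum_j x_j b y_j = 1_A$, and this identity holds in $A^\cU$ as well. Thus $a$ lies in the ideal of $A^\cU$ generated by $b$. \Cref{lemma:rr0_tad}, applied to the real rank zero algebra $A^\cU$, then yields for every $n$ and $\e$ a $*$-homomorphism $\Phi\colon M_n \to \overline{aA^\cU a}$ with $\tau(\Phi(1_n)) \ge \tau(a)-\e$ for all $\tau \in T(A^\cU)$. In particular this holds for all limit traces.

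\emph{Lifting.} Since $\overline{a\ell^\infty(A)a} = \ell^\infty(\overline{aAa})$, projectivity of order zero maps from $M_n$ \cite[Proposition 1.2.4]{Winter:cd_II} lets us lift $\Phi$ to a sequence $(\phi_k)_k$ of c.p.c.\ order zero maps $M_n \to \overline{aAa}$. I then claim that for $\cU$-many $k$ we have $\tau(\phi_k(1_n)) \ge \tau(a)-2\e$ for all $\tau \in T(A)$. Suppose not. Then for $\cU$-many $k$ there is $\tau_k \in T(A)$ violating this inequality; extending arbitrarily off this set, the limit trace $\bar\tau = (\tau_k)_k$ gives $\bar\tau(\Phi(1_n)) \le \bar\tau(a)-2\e$. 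This contradicts the previous step, since $a$ is a constant sequence. Any such $\phi_k$ witnesses tracial almost divisibility of $a$.

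The main obstacle is making sure the small-dimension witness $b$ controls $d_\tau$ for \emph{all} traces on $A^\cU$, not just limit traces, since \Cref{lemma:rr0_tad} is stated for $T(A^\cU)$. This is why I take $b$ in $A$ with exact orthogonality and Cuntz equivalence relations: these relations transfer verbatim to $A^\cU$, so no approximation argument in the ultrapower is needed.
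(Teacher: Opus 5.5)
Your proof is correct and follows the paper's route: a Glimm-type element $b\in A$ with small dimension function that generates $A^\cU$ as an ideal, \Cref{lemma:rr0_tad} applied inside $A^\cU$, and a lift via projectivity of order zero maps as in \Cref{prop:gamma_tad}. Your two refinements are worthwhile: the exact orthogonal Cuntz-equivalent copies $b_0,\dots,b_{N-1}$ give $d_\tau(b)\le 1/N$ for all $\tau\in T(A^\cU)$, as the lemma literally requires, whereas the paper only records the bound on $T_\cU(A)$ and tacitly uses that the lemma's proof works trace by trace; and your ultrafilter argument makes explicit the final lifting step the paper leaves implicit. A minor point you share with the paper: $\overline{a\ell^\infty(A)a}=\ell^\infty(\overline{aAa})$ is in general only an inclusion (it fails when $0$ is a non-isolated point of the spectrum of $a$), but the inclusion $\subseteq$ is all the lifting step needs.
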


\begin{proof}
As $M_m(A)^\cU \cong M_m(A^\cU)$, if $A^\cU$ has real rank zero, then so does $M_m(A)^\cU$, for every $m \in \bbN$, by \cite[Theorem 2.10]{BP:rr0}. Because of this, in order to show that $A$ is tracially almost divisible, it suffices to show that elements in $A_{1,+}$ are almost divisible.

Let thus $n \ge 1$, $a \in A_{1,+}$ and fix $\e > 0$. By simplicity and non-elementarity of $A$, for every $\delta > 0$ there is $b \in A_+$ such that $d_\tau(b) < \delta$ for all $\tau \in T(A)$---this follows for instance by Glimm's theorem \cite[Proposition 3.10]{RobRor:divisibility}---so in particular, identifying $b$ with the corresponding constant sequence in $A^\cU$, we get
\begin{equation}
d_\tau(b) < \delta, \quad \tau \in T_\cU(A).
\end{equation}
By simplicity of $A$, the ideal generated by $b$ in $A^\cU$ contains $1_A = 1_{A^\cU}$, hence it is the whole $A^\cU$, hence  by \Cref{lemma:rr0_tad} there is a $*$-homomorphism
\begin{equation}
\Phi \colon M_n \to \overline{a A^\cU a}
\end{equation}
such that
\begin{equation} \label{eq:trace_Phi}
\tau(\Phi(1_n)) > \tau(a) - \e, \quad \tau \in T_\cU(A).
\end{equation}

From here, it is possible to lift $\Phi$ to a c.p.c. order zero map $\phi \colon M_n \to \overline{aAa}$ such that $\tau(\phi(1_n)) > \tau(a) - \e$ with an argument analogous to that of \ref{item1:gamma_tad} $\Rightarrow$ \ref{item2:gamma_tad} in \Cref{prop:gamma_tad}.
\end{proof}

We can use \Cref{prop:rr0_tad} to give a partial answer to \cite[Problem XXI]{STW:99problems}, which asks whether simple, nuclear $C^\ast$-algebras whose tracial ultrapowers has real rank zero have uniform property $\Gamma$.

\begin{theorem} \label{thm:rr0_gamma}
Let $A$ be a separable, simple, unital, non-elementary $C^\ast$-algebra with $T(A) \ne \emptyset$ and with tracially locally finite nuclear dimension. If $A^\cU$ has real rank zero, then $A$ is uniformly McDuff, and in particular it has uniform property $\Gamma$
\end{theorem}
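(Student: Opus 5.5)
The proof is a direct concatenation of the two preceding results. The plan is to first invoke \Cref{prop:rr0_tad}. Its hypotheses are that $A$ is simple, unital, non-elementary, satisfies $T(A) \ne \emptyset$, and has $A^\cU$ of real rank zero; all of these are assumed in the statement. It yields that $A$ is tracially almost divisible in the sense of \Cref{def:tad}, meaning that every positive contraction in every matrix amplification $M_m(A)$ is tracially almost divisible.

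Next I would apply the implication \ref{item2:gamma_tad} $\Rightarrow$ \ref{item1:gamma_tad} of \Cref{prop:gamma_tad}. That implication requires $A$ to be separable, simple, unital and non-elementary with $T(A) \ne \emptyset$, and additionally to have tracially locally finite nuclear dimension. All of these are among the hypotheses of the theorem, so $A$ is uniformly McDuff. Note that $T(A)$ is automatically compact because $A$ is unital, so \Cref{def:mcduff} and \Cref{def:gamma} apply.

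Finally, I would deduce uniform property $\Gamma$ from uniform McDuffness. Given a unital $*$-homomorphism $\Phi \colon M_n \to A^\cU \cap A'$, set $p_i \coloneqq \Phi(e_{ii})$. These are pairwise orthogonal projections in $A^\cU \cap A'$ summing to $1_{A^\cU}$. It remains to check the trace condition $\tau(ap_i) = \frac1n\tau(a)$ for $a \in A$ and $\tau \in T_\cU(A)$; I would do this as follows.
\begin{itemize}
\item Each $p_i$ equals $v_i^\ast v_i$, where $v_i \coloneqq \Phi(e_{0i})$ commutes with $A$.
\item Consequently $\tau(ap_i) = \tau(a v_i^\ast v_i) = \tau(v_i a v_i^\ast) = \tau(a v_i v_i^\ast) = \tau(ap_0)$.
\item Summing over $i$ gives $n\,\tau(ap_0) = \tau(a)$.
\end{itemize}
This computation is valid for every trace on $A^\cU$, so in particular for every $\tau \in T_\cU(A)$. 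Alternatively, one may simply cite \cite[Proposition 2.3]{CETWW}.

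I do not expect any real obstacle. The only point needing care is confirming that the standing hypotheses, in particular separability, match those required by the two cited propositions.
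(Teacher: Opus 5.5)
Your proposal is correct and is essentially the paper's proof: Proposition~\ref{prop:rr0_tad} gives tracial almost divisibility, and the implication from tracial almost divisibility to uniform McDuffness in Proposition~\ref{prop:gamma_tad} (which uses tracially locally finite nuclear dimension) finishes. The only difference is the last step: the paper cites \cite[Theorem 4.6]{CETW:gamma}, whereas you verify directly that the projections $p_i = \Phi(e_{ii})$ witness uniform property $\Gamma$; your computation is valid for every trace on $A^\cU$ and needs no nuclearity.
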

\begin{proof}
The $C^\ast$-algebra $A$ is tracially almost divisible by \Cref{prop:rr0_tad}, hence it is uniformly McDuff by \Cref{prop:gamma_tad}. Uniform property $\Gamma$ follows by \cite[Theorem 4.6]{CETW:gamma}.
\end{proof}

We can finally deduce \Cref{thm_main:sr1_gamma}.

\begin{corollary} \label{cor:sr1_tad_gamma}
Let $A$ be a separable, simple, unital, non-elementary $C^\ast$-algebra with stable rank one and such that $T(A) \ne \emptyset$. Then $A$ is tracially almost divisible. If moreover $A$ has tracially locally finite nuclear dimension, then $A$ is uniformly McDuff, and in particular it has uniform property $\Gamma$.
\end{corollary}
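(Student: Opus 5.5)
The argument assembles three ingredients described in the introduction. First, I invoke the result of \cite{Fu:sr1_rr0}: a separable, simple, unital $C^\ast$-algebra with stable rank one has tracial approximate oscillation zero. By the characterization recalled in the introduction, this is equivalent to the tracial ultrapower $A^\cU$ having real rank zero. Before using it, I check that our hypotheses match those of \cite{Fu:sr1_rr0}. We assume $T(A)\neq\emptyset$. Since $A$ is simple, $\|\cdot\|_{2,T(A)}$ is a norm, so $A$ embeds in $A^\cU$. Since $A$ is unital, $T(A)$ is compact and $A^\cU$ is unital.

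If \cite{Fu:sr1_rr0} is phrased via the closed convex hull $T_\cU(A)$ of limit traces rather than all of $T(A^\cU)$, I note that \Cref{lemma:rr0_tad} and \Cref{prop:rr0_tad} only need two things: real rank zero of $A^\cU$ as a $C^\ast$-algebra, and trace estimates over $T_\cU(A)$. So this discrepancy is harmless. The one point needing care, and the main potential obstacle, is the exact translation between ``tracial approximate oscillation zero'' and ``$A^\cU$ has real rank zero''. I plan to cite the equivalence as stated in \cite{Fu:sr1_rr0} rather than reprove it.

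Second, with $A^\cU$ of real rank zero, \Cref{prop:rr0_tad} applies directly, since $A$ is simple, unital, non-elementary and $T(A)\ne\emptyset$. Hence $A$ is tracially almost divisible, which is the first assertion.

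Third, assume in addition that $A$ has tracially locally finite nuclear dimension. I apply \Cref{thm:rr0_gamma}, or equivalently combine tracial almost divisibility with the implication (2)$\Rightarrow$(1) of \Cref{prop:gamma_tad}. This gives that $A$ is uniformly McDuff. Uniform property $\Gamma$ then follows from \cite[Theorem 4.6]{CETW:gamma}, or directly: a unital embedding $M_n\to A^\cU\cap A'$ produces the diagonal matrix units as the required projections $p_i$. These satisfy $\tau(ap_i)=\tfrac1n\tau(a)$ for $\tau\in T_\cU(A)$, because $a\mapsto\tau(a\,\cdot)$ restricts to a tracial functional on the copy of $M_n$, which is unique up to scaling. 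Separability is used only in this last step, through Kirchberg's $\e$-test inside \Cref{prop:gamma_tad}.
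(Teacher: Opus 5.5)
Your proposal is correct and follows the paper's argument: \cite[Theorem A]{Fu:sr1_rr0} gives that $A^\cU$ has real rank zero (the paper's one-line proof says ``stable rank one'' at this point, which is a slip), then \Cref{prop:rr0_tad} gives tracial almost divisibility, and \Cref{thm:rr0_gamma} gives uniform McDuffness and uniform property $\Gamma$. One small correction to your closing remark: separability is also a hypothesis of the cited result of Fu, as you yourself note at the start, so it is not used only in the Kirchberg $\e$-test step.
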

\begin{proof}
The tracial ultrapower $A^\cU$ has stable rank one by \cite[Theorem A]{Fu:sr1_rr0}, hence $A$ is uniformly McDuff, and thus has uniform property $\Gamma$, by \Cref{thm:rr0_gamma}.
\end{proof}

\section{Tracially LTH-algebras with locally flat dimension growth} \label{S:LTH}
In this section we present a route alternative to \cite{Fu:sr1_rr0} to prove real rank zero of tracial ultrapowers (\Cref{thm:LTH_rr0}). Instead of using stable rank one, we concentrate on a class of $C^\ast$-algebras that satisfy the following form of tracial local homogeneity.

\begin{definition} \label{def:LTH}
Let $A$ be a $C^\ast$-algebra such that $T(A) \ne \emptyset$. Then $A$ is a \emph{tracially locally trivally homogeneous $C^\ast$-algebra} (a \emph{tracially LTH-algebra} for short) if for every finite subset of contractions $F \subset A_1$ and every $\e > 0$ there exists a $C^\ast$-subalgebra $C \subseteq A$ with
\begin{equation} \label{eq:LTH}
C \cong \bigoplus_{i < m} M_{n_i}(C_0(Z_i)),
\end{equation}
where $m, n_0, \dots, n_{m-1} \in \bbN$ and each $Z_i$ is a locally compact metric space, such that for every $a \in F$ there is some $c \in C_1$ satisfying $\| a- c \|_{2, T(A)} < \e$.

We say that a tracially LTH-algebra $A$ has \emph{locally flat dimension growth} if  for every finite subset  $F \subset A_1$ and $\e > 0$ there is a constant $N > 0$ such that, for every $k \in \bbN$ , the approximating $C^\ast$-subalgebra $C \cong \bigoplus_{i < m} M_{n_i}(C_0(Z_i))$ in \eqref{eq:LTH} can be chosen so that $n_i \ge k$ for all $i<m$ and
\begin{equation} \label{eq:locally flat_dim}
\max_{i < m} \left\{\frac{\dim(Z_i)}{n_i} \right\} < N.
\end{equation}
\end{definition}

Locally flat dimension growth is inspired by the almost homonymous condition for AH-algebras introduced in \cite{toms:growth}. Compared to the last condition, where the dimension growth ratio is globally controlled by a single constant, the \emph{local} adjective  in \Cref{def:LTH} emphasizes that the constant $N$ can vary depending on the subset we wish to approximate and the degree of tolerance of the approximation. This detail makes the property extremely versatile and easy to verify in concrete examples, as we will see for instance in \Cref{prop:LTH_crossed}.

The main goal of this section is showing that if $A$ is a simple, separable, unital, non-elementary, tracially LTH-algebra with locally flat dimension growth then $A^\cU$ has real rank zero. This will ensure tracial almost divisibility of $A$ by \Cref{prop:rr0_tad} and in fact uniform property $\Gamma$ itself, since the local structure of tracially LTH-algebras immediately implies tracially locally finite nuclear dimension.

\Cref{thm:LTH_rr0} follows by an argument of topological flavor which generalizes an idea sketched in \cite[Example 3.5]{EN:propS}. This proof exploits an equivalent characterization of $A^\cU$ having real rank zero introduced in \cite[Definition 3.1]{EN:propS} and named \emph{property (S)}. This property (S), not to be confused with the one in \cite{EPR}, originates as a  $C^\ast$-algebraic adaptation of the small boundary property for topological dynamical systems. We report it in the proposition below, after introducing some notation.

Given $0<\delta < 1$, we let $\eta_\delta$ be the continuous function $\eta_\delta \colon [-1,1] \to [0,1]$ defined as
\begin{equation}
\eta_\delta(x) \coloneqq \begin{cases} 1 & \text{if } x \in [-\delta/2, \delta/2], \\ 0 & \text{if } x \in [-1, -\delta] \cup [\delta, 1], \\ \text{linear } & \text{elsewhere}
\end{cases}
\end{equation}

\begin{proposition}[{\cite[Proposition 3.3]{EN:propS}}] \label{prop:tr_rr0}
Let $A$ be a unital $C^\ast$-algebra such that $T(A) \ne \emptyset$. The following are equivalent.\footnote{The original definition of property (S) in \cite[Definition 3.1]{EN:propS} does not have the additional condition that both $a$ and $b$ are contractions. This is however implicitly used in the proof of \cite[Proposition 3.3]{EN:propS}.}
\begin{enumerate}
\item $A^\cU$ has real rank zero.
\item For every $a \in A_{1,\sa}$ and $\epsilon > 0$ there are $b \in A_{1,\sa}$ and $0< \delta < 1$ such that
\begin{enumerate}[ref=\alph*]
\item $\| a - b \|_{2, T(A)} < \e$,
\item \label{itemb:propS} $\tau(\eta_\delta(b)) < \e$ for all $\tau \in T(A)$.
\end{enumerate}
\end{enumerate}
\end{proposition}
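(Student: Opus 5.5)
Both implications go through the tracial ultrapower. The plan is to pass between elements of $A$ and sequences representing elements of $A^\cU$, using functional calculus on self-adjoint contractions and the fact that traces on $A^\cU$ are controlled by limit traces.

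\emph{(2) $\Rightarrow$ (1).} Real rank zero means invertible self-adjoints are dense in $(A^\cU)_\sa$. It suffices to approximate a self-adjoint contraction $x \in A^\cU$, represented by a sequence $(a_k)$ with $a_k \in A_{1,\sa}$. This is possible since self-adjoint contractive lifts exist via functional calculus.

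Fix $\e>0$. For each $k$, apply (2) to $a_k$ to get $b_k \in A_{1,\sa}$ and $\delta_k$ with $\|a_k - b_k\|_{2,T(A)} < \e$ and $\tau(\eta_{\delta_k}(b_k)) < \e$. Let $g_k$ be the continuous odd function that is the identity outside $[-\delta_k/2,\delta_k/2]$ and is modified inside so as to vanish only on a tiny set. More precisely, put $c_k \coloneqq f_k(b_k)$, where $f_k(t)=t$ for $|t|\ge\delta_k$ and $f_k(t)=\mathrm{sign}(t)\,\delta_k$ otherwise, adjusted by a small shift to avoid $0$. Then:
\begin{itemize}
\item $c_k$ is invertible;
\item $\|c_k\| \le 1$;
\item $\|b_k - c_k\|_{2,\tau}^2 \le (2\delta_k)^2\,\tau(\eta_{\delta_k}(b_k)) + (\text{small})$, since $b_k - c_k$ is supported where $|b_k|<\delta_k$.
\end{itemize}
The last bound gives $\|b_k-c_k\|_{2,T(A)} \lesssim 2\sqrt{\e}$.

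The $c_k$ are uniformly bounded, but their inverses need not be, since $\delta_k$ may shrink. So the sequence $(c_k)$ is not automatically invertible in $A^\cU$. To fix this, I would take $\delta$ uniform, which requires a compactness argument, or rather proceed differently: the characterization used in \cite{EN:propS} obtains the projection-approximate-unit form of real rank zero instead. Concretely, let $p$ be the class of $(\chi_{[0,1]}(b_k))$, approximated by continuous functions that are constant off the $\delta_k$-window. Up to a trace-small error, this gives elements of $A^\cU$ that are projections modulo the trace-kernel ideal, because $\eta_{\delta_k}(b_k)$ has trace $<\e$. Letting $\e \to 0$ along a diagonal sequence (Kirchberg's $\e$-test) yields genuine projections $e_\pm \in A^\cU$ with $x \approx x e_+ + x e_-$, with $x e_+ \ge 0$ and $x e_- \le 0$ commuting spectral pieces. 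From these one gets invertible self-adjoint approximants $x e_+ + \e e_+ - (\ldots)$. This step, keeping invertibility and producing honest projections in $A^\cU$ via the $\e$-test, is the main obstacle.

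\emph{(1) $\Rightarrow$ (2).} Given $a \in A_{1,\sa}$, use real rank zero of $A^\cU$ to approximate $a$ in norm by an invertible $y \in (A^\cU)_{1,\sa}$ with $\|y^{-1}\| \le 4/\delta$ for some $\delta$. Lift $y$ to $(b_k)$ with $b_k \in A_{1,\sa}$. The function $\eta_\delta$ vanishes on $\sigma(y)$, so $\eta_\delta(y)=0$ in $A^\cU$. Hence $\lim_{k\to\cU}\|\eta_\delta(b_k)\|_{2,T(A)}=0$, and therefore $\sup_\tau \tau(\eta_\delta(b_k)) \le \|\eta_\delta(b_k)\|_{2,T(A)} \to 0$. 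Likewise $\|a-b_k\|_{2,T(A)} \to \|a-y\|_{2,T_\cU} < \e$ along $\cU$. Choosing $k$ appropriately gives $b \coloneqq b_k$.
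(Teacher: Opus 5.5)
\textbf{(1)$\Rightarrow$(2).} This direction is fine, up to a constant. The bound $\|y^{-1}\|\le 4/\delta$ only keeps $\sigma(y)$ outside $(-\delta/4,\delta/4)$, so it is $\eta_{\delta/4}(y)$, not $\eta_\delta(y)$, that vanishes; relabel $\delta$.

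\textbf{(2)$\Rightarrow$(1).} Here there is a genuine gap: you leave this direction open, calling it ``the main obstacle''. The difficulties you run into come from two avoidable mistakes.

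First, applying (2) to every $a_k$ with the same $\e$ only gives an element of $A^\cU$ that is close to $x$ in $\|\cdot\|_{2,T_\cU(A)}$. That says nothing about norm density of invertibles. Instead, apply (2) to $a_k$ with tolerance $1/k$. Then $(b_k)_k$ is another representative of $x$ itself, and $\sup_{\tau\in T(A)}\tau(\eta_{\delta_k}(b_k))\to 0$. No $\e$-test is needed, since each $a_k$ is handled separately. Also, elements that are projections modulo $J_{T(A)}$ are already projections in $A^\cU$; there is nothing further to upgrade.

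Second, your $c_k$ cannot be invertible in $A$ in general. A continuous real function agreeing with the identity near $\pm1$ must vanish somewhere in $[-1,1]$, and $A$ need not have real rank zero. Moreover, by tying the size of the perturbation to $\delta_k$, you force $\|c_k^{-1}\|\sim 1/\delta_k$, which is exactly the blow-up you worried about.

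\textbf{The missing idea.} Decouple the size of the perturbation from the window where the function crosses zero. Fix $\theta\in(0,1)$ and assume $\delta_k<\theta$; this is allowed since $\eta_{\delta'}\le\eta_\delta$ for $\delta'\le\delta$.
\begin{itemize}
\item Let $f_k$ be the identity on $|t|\ge\theta$, equal to $\pm\theta$ on $\delta_k/2\le\pm t\le\theta$, and linear on $[-\delta_k/2,\delta_k/2]$.
\item Let $g_k=1/f_k$ off $(-\delta_k/2,\delta_k/2)$ and $g_k=f_k/\theta^2$ on it.
\end{itemize}
Then $\|f_k-\mathrm{id}\|_\infty\le\theta$, $\|g_k\|_\infty\le 1/\theta$, and $0\le 1-f_kg_k\le\eta_{\delta_k}$. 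Hence
\[
\|1-f_k(b_k)g_k(b_k)\|_{2,T(A)}^2\le\sup_{\tau\in T(A)}\tau(\eta_{\delta_k}(b_k))\to0 .
\]
So $z\coloneqq(f_k(b_k))_k$ is a self-adjoint element of $A^\cU$ with commuting inverse $(g_k(b_k))_k$ of norm at most $1/\theta$, and $\|x-z\|\le\theta$. Letting $\theta\to0$ gives real rank zero.

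Your projection sketch can be completed the same way. Let $\chi_k$ rise linearly from $0$ to $1$ on $[0,\delta_k/2]$. Then $e_+\coloneqq(\chi_k(b_k))_k$ is an honest projection in $A^\cU$ commuting with $x$, with $xe_+\ge0$ and $x(1-e_+)\le 0$ there. The element $x+\theta(2e_+-1)$ is then an invertible self-adjoint element within $\theta$ of $x$.

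In both versions, invertibility modulo $J_{T(A)}$ comes from the trace-smallness of $\eta_{\delta_k}(b_k)$, not from the size of $\delta_k$.
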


\subsection{A topological intermezzo}
The following subsection sets the stage for the proof of \Cref{thm:LTH_rr0} and it is based on a series of classical results from dimension theory and differential topology, with the main one being Sard's theorem for smooth manifolds. We refer the reader to \cite{engelking:dim_fininf, GP:diff_top, Pears:dim} for all the necessary background.

We recall that, given a topological space $X$ and a family $\cU$ of open subsets of $X$, the \emph{order of $\cU$}, denoted $\mathrm{ord}(\cU)$, is the smallest integer $n \ge -1$ such that any $n+2$ tuple of elements in $\cU$ has empty intersection. The notation $\dim(X)$ denotes the \emph{covering dimension} of the topological space $X$, that is the smallest integer $n \in \bbN$ such that any finite open cover of $X$ has a finite open refinement of order at most $n$, or $\infty$ if no such integer exists.

\begin{proposition} \label{prop:general_position}
Fix $n \in \bbN$. Let $Z$ be a compact metric space, let $V \subseteq \bbR^n$ be a finite union of smooth manifolds  and let $f \colon Z \to \mathbb{R}^n$ be a continuous function. Suppose that
\begin{equation}
\dim(Z) + \dim(V) < n.
\end{equation}
Then, for every $\epsilon > 0$, there exists a continuous $g \colon Z \to \mathbb{R}^n \setminus V$ such that $\| f(z) - g(z) \|_2 < \epsilon$ for all $z \in Z$, where $\| \cdot \|_2$ is the Euclidean norm on $\bbR^n$.
\end{proposition}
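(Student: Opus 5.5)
The plan is to reduce to the case where $Z$ is a finite polyhedron and $f$ is piecewise linear, and then push the simplices off $V$ by a generic translation, using Sard's theorem.

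\textbf{Step 1: factor through a polyhedron.} Set $d = \dim(Z)$. By uniform continuity of $f$ on the compact space $Z$, choose a finite open cover $\cV$ of $Z$ with $\mathrm{ord}(\cV) \le d$ such that $f$ oscillates by less than $\epsilon/3$ on each member. Let $\{\rho_U\}_{U \in \cV}$ be a partition of unity subordinate to $\cV$. It gives a continuous map $\rho \colon Z \to |N(\cV)|$ into the geometric realization of the nerve, which is a finite simplicial complex of dimension at most $d$. For each $U \in \cV$ pick $z_U \in U$, and let $h \colon |N(\cV)| \to \bbR^n$ be the affine extension of the vertex assignment $U \mapsto f(z_U)$. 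Then for every $z \in Z$ the point $h(\rho(z))$ is a convex combination of values $f(z_U)$ with $z \in U$, so $\|h(\rho(z)) - f(z)\|_2 < \epsilon/3$.

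\textbf{Step 2: translate off $V$.} Write $V = \bigcup_j V_j$ with each $V_j$ a smooth manifold with $\dim(V_j) \le \dim(V)$. Let $\sigma$ run over the finitely many simplices of $N(\cV)$, each of dimension at most $d$. Consider the smooth maps
\[
F_{\sigma,j} \colon \mathrm{int}(\sigma) \times V_j \to \bbR^n, \qquad (x,v) \mapsto v - h(x).
\]
Since $\dim \sigma + \dim V_j < n$, the image of each $F_{\sigma,j}$ has measure zero: the preimage of every point is the whole domain, so every point is a critical value, and Sard's theorem applies. The same argument handles the lower-dimensional faces. Hence the finite union of these images is null, and we can choose $t \in \bbR^n$ with $\|t\|_2 < \epsilon/3$ outside all of them. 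This means exactly that $(h + t)(x) \notin V$ for every $x \in |N(\cV)|$.

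\textbf{Step 3: conclude.} Set $g \coloneqq (h + t) \circ \rho$. Then $g$ is continuous, takes values in $\bbR^n \setminus V$, and satisfies $\|f(z) - g(z)\|_2 < 2\epsilon/3 < \epsilon$ for all $z \in Z$.

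\textbf{Main obstacle.} I expect the delicate point to be Step 1, and specifically the convention for $\dim(V)$ when $V$ is a finite union of manifolds, which are possibly non-closed or of varying dimensions. I would interpret $\dim(V)$ as the maximum of the manifold dimensions. The Sard argument only needs each piece to be a smooth (second countable) manifold, so no closedness of $V$ is required: we avoid $V$ pointwise, not up to some neighbourhood. Step 1 itself is the standard nerve construction from dimension theory, and once it is in place the rest is routine.
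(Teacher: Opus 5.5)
Your proof is correct and follows the paper's route. You factor $f$, up to a small error, through the nerve of a finite cover of order at most $\dim(Z)$, apply Sard's theorem to the difference maps on $\mathrm{int}(\sigma) \times V_j$, and translate by a small vector avoiding the resulting null set; the one variation is that you map the abstract nerve affinely via the values $f(z_U)$ rather than realizing it with vertices in general position, which is harmless since only smoothness of $F_{\sigma,j}$ on a manifold of dimension less than $n$ is needed. One slip in Step 2: every point of the domain is critical because $\dim(\sigma) + \dim(V_j) < n$ forces the differential to have rank less than $n$, not because ``the preimage of every point is the whole domain''.
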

\begin{proof}
Set $d \coloneqq \dim(Z)$ and, using uniform continuity of $f$, find an open cover $\cU$ of $X$ with $\mathrm{ord}(\cU) \le d$ and such that
\begin{equation}
\| f(x) - f(y)\|_2 < \frac{\e}{3}, \quad x, y \in U, \, U \in \cU.
\end{equation}

By \cite[Theorem 1.10.2]{engelking:dim_fininf} there is a collection of points $(p_U)_{U \in \cU} \subseteq \bbR^n$ which are in general position---that is if $p_0, \dots, p_{n-1} \in (p_U)_{U \in \cU}$ are distinct, then they are affinely independent---and such that the diameter of the set $f(U) \cup \{p_U\}$ is smaller than $\e/2$, for each $U \in \cU$. Let $N(\cU)$ be the \emph{nerve} $N(\cU)$ of $\cU$ with vertices $(p_U)_{U \in \cU}$, that is $N(\cU)$ is the simplicial complex which contains the simplex $\mathrm{conv}(p_{U_0}, \dots, p_{U_{m-1}}) $ if and only $U_0 \cap \dots \cap U_{m-1} \ne \emptyset$, whenever $U_0, \dots, U_m \in \cU$

Let next $P \subseteq \bbR^n$ be the polyhedron underlying $N(\cU)$ and fix a partition of the unity $(h_U)_{U \in \cU}$ of $Z$ subordinate to $\cU$. By \cite[Theorem 1.10.6-7]{engelking:dim_fininf}, the map $f_0 \colon Z \to \bbR^n$ sending $z \mapsto \sum_{U \in \cU} h_U(z) p_U$ is such that
\begin{equation} \label{eq:f_approx}
\| f(z) - f_0(z) \|_2 < \e/2, \quad z \in Z,
\end{equation}
and that $f_0(Z) \subseteq P$.

Notice that $\mathrm{ord}(\cU) \le d $ automatically implies that all simplices comprising $P$ have dimension at most $d$. Furthermore, $P$ is equal to the union of the interiors of all simplices in $N(\cU)$, hence $P = \bigcup_{i < s} P_i$, where each $P_i$ is a smooth manifold whose dimension is at most $d$. By assumption we moreover have that $V = \bigcup_{j < t} V_j$, where each $V_j$ is a smooth manifold with dimension at most $\dim(V)$.

Consider then, for $i <s$ and $j < t$, the map $F_{i,j} \colon P_i \times V_j \to \bbR^n$ where $F_{i,j}(x,y) \coloneqq x - y$. Since
\begin{equation}
\dim(P_i \times V_j)\le \dim(X) + \dim(V) < n, \quad i < s, j < t,
\end{equation}
by Sard's theorem \cite[p. 40]{GP:diff_top} the set
\begin{equation}
P - V \coloneqq \{ p - v : p \in P, \, v \in V \} = \bigcup\nolimits_{i,j}F_{i,j}(P_i \times V_j),
\end{equation}
has Lebesgue measure zero, and thus empty interior. This means that it is possible to find some $w \in \bbR^n \setminus (P - V)$ such that $\| w \|_2 < \e/2$.

Define finally
\begin{equation}
\begin{split}
g \colon Z &\to \bbR^n \\
z &\mapsto f_0(z) - w
\end{split}
\end{equation}
The choice of $w$, along with \eqref{eq:f_approx}, yields $\| f(z) - g(z) \|_2 < \e$ and $g(z) \notin V$, for all $z \in Z$.
\end{proof}

In the following lemma we use \Cref{prop:general_position} to approximate elements of $M_n(C_0(Z))$, for some locally compact $Z$ with sufficiently small dimension, with elements satisfying condition \ref{itemb:propS} in the second item of \Cref{prop:tr_rr0} up to a prescribed degree of tolerance.

\begin{lemma} \label{lemma:approx_rank}
Let $Z$ be a locally compact metric space with $\dim(Z) \le d < \infty$, let $T \subseteq T(M_n(C_0(Z)))$ be a compact subset of traces and suppose that $k,n \in \bbN$ are such that $k \le n$ and $d < k^2$. Then, for every $\e > 0$ and every contraction $f \in M_n(C_0(Z))_{1,\sa}$, there exists a contraction $g \in M_n(C_0(Z))_{1,\sa}$ and some $\delta > 0$ such that
\begin{enumerate}
\item $\| f - g \| < \e$,
\item \label{item2:small_trace} $\tau(\eta_\delta(g)) < \frac{k}{n}$, for all $\tau \in T$.
\end{enumerate}
\end{lemma}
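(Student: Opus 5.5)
The plan is to apply \Cref{prop:general_position} pointwise in the space of self-adjoint matrices, after reducing to a compact piece of $Z$ that carries almost all the mass of every trace in $T$.

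\emph{Identification with Euclidean space.} Identify $(M_n)_\sa$ with $\bbR^{n^2}$, so that the Hilbert--Schmidt norm becomes the Euclidean norm; it dominates the operator norm. For $j \le n$, let $V_j \subseteq (M_n)_\sa$ be the set of Hermitian matrices whose kernel has dimension exactly $j$. It is classical that $V_j$ is a smooth submanifold of codimension $j^2$. Indeed, it fibres over the Grassmannian $\mathrm{Gr}(j,n)$, of real dimension $2j(n-j)$, with fibre the invertible Hermitian operators on the complement, of dimension $(n-j)^2$. Hence $\dim V_j = n^2 - j^2$. Set $V \coloneqq \bigcup_{j \ge k} V_j$, a finite union of smooth manifolds with $\dim V \le n^2 - k^2$. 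The hypothesis $d < k^2$ then gives $\dim(K') + \dim(V) < n^2$ for every closed subset $K' \subseteq Z$.

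\emph{Reduction to a compact set.} Tracial states on $M_n(C_0(Z))$ are of the form $\mathrm{tr}_n \otimes \mu$ with $\mu$ a Radon probability measure on $Z$. Since $T$ is weak$^\ast$-compact and consists of states, the corresponding family of measures is tight. To see this, take an approximate unit $(e_\lambda)$ of $C_0(Z)$ with compact supports. The functions $\mu \mapsto \mu(e_\lambda)$ increase to $1$ pointwise on the compact set $T$, so by Dini's theorem the convergence is uniform. Hence there is a compact $K \subseteq Z$ with $\mu(Z \setminus K) < \frac{1}{2n}$ for all $\mu$ coming from $T$. Fix a compact neighbourhood $K'$ of $K$ and a cutoff $h \in C_c(Z,[0,1])$ with $h|_K = 1$ and $\mathrm{supp}(h) \subseteq K'$.

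\emph{Construction of $g$.} Replace $f$ by $(1-\e/4)f$, so that we may perturb it by less than $\e/4$ and remain contractive. Apply \Cref{prop:general_position} to $(1-\e/4)f|_{K'} \colon K' \to \bbR^{n^2}$ with tolerance $\e/4$. This yields $g_0 \colon K' \to (M_n)_\sa \setminus V$; that is, each $g_0(z)$ has kernel of dimension less than $k$. Define
\[
g \coloneqq h g_0 + (1-h)(1-\e/4)f,
\]
extended by $(1-\e/4)f$ outside $K'$. Then $g \in M_n(C_0(Z))_\sa$, $\|g\| \le 1$, $\|f - g\| < \e/2$, and $g = g_0$ on $K$.

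\emph{Choice of $\delta$ and the trace estimate.} On $K$, the $k$-th smallest value of $|\lambda|$ over the eigenvalues $\lambda$ of $g(z)$ (counted with multiplicity) depends continuously on $z$ and is strictly positive. By compactness it has a positive minimum; choose $\delta$ below that minimum. Then for $z \in K$, $\eta_\delta(g(z))$ has rank at most $k-1$, so $\mathrm{tr}_n(\eta_\delta(g(z))) \le \frac{k-1}{n}$. For $\tau = \mathrm{tr}_n \otimes \mu \in T$ we therefore get
\[
\tau(\eta_\delta(g)) \le \frac{k-1}{n} + \mu(Z\setminus K) < \frac{k-1}{n} + \frac{1}{2n} < \frac{k}{n}.
\]

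\emph{Main obstacle.} I expect the delicate points to be two. The first is the codimension count $\dim V_j = n^2 - j^2$ for Hermitian matrices with $j$-dimensional kernel, which I would verify via the fibration over $\mathrm{Gr}(j,n)$ described above. The second is the tightness argument that lets us pass from locally compact to compact $Z$.
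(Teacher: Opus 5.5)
Your proof is correct. Its core is the same as the paper's: the same set $V$ of self-adjoint matrices of rank at most $n-k$, the same dimension count $n^2-k^2$, and the same appeal to \Cref{prop:general_position}.

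Where you genuinely diverge is in handling non-compact $Z$.
\begin{itemize}
\item The paper first proves the lemma for compact $Z$, with $T$ the full trace space. It then passes to the one-point compactification $\alpha Z$, which needs $\dim(\alpha Z)\le d$, and obtains $h\in M_n(C(\alpha Z))$ with $\tau(\eta_\delta(h))<k/n$. It brings $h$ back into $M_n(C_0(Z))$ as $g=he_i$, where $e_i$ is an approximate-unit element with $\|1-e_i\|_{2,\tau}$ uniformly small on $T$. This needs a separate estimate, taken from \cite[Lemma 3.1]{GGKNV:tr_amen}, that $\tau(\eta_\delta(\cdot))$ stays below $k/n$ under small $\|\cdot\|_{2,\tau}$-perturbations.
\item You instead use compactness of $T$ up front to get a compact $K$ carrying all but $\frac{1}{2n}$ of every measure. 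You apply general position only on a compact neighbourhood $K'$ and glue back to the rescaled $f$ with a cutoff. You then exploit the integer slack: on $K$ the pointwise bound is $\frac{k-1}{n}$, which leaves room $\frac{1}{n}$ to absorb the tail mass.
\end{itemize}

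Your route avoids both the dimension theory of $\alpha Z$ and the functional-calculus perturbation lemma. Your choice of $\delta$ via continuity of the $k$-th smallest singular value also replaces the paper's local $\delta_z$-plus-compactness argument, and it actually gives the sharper bound $\frac{k-1}{n}+\eta$ for any prescribed $\eta>0$. The paper's route, in exchange, is more modular: it reduces to the compact case through a general principle that applies to any condition stable under uniformly small $2$-norm perturbations.

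Two small points to tidy up.
\begin{itemize}
\item The Dini argument needs an \emph{increasing} approximate unit of $C_0(Z)$ with compactly supported elements. Such approximate units exist. Alternatively, you can get tightness directly from inner regularity of each measure, weak$^\ast$-continuity of $\mu\mapsto\mu(e)$ for compactly supported $e$, and a finite subcover of $T$.
\item You should assume $\e\le 1$, so that replacing $f$ by $(1-\e/4)f$ is a genuine contraction-preserving rescaling.
\end{itemize}
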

\begin{proof}
We treat first the case of $Z$ compact, in which case we take $T = T(M_n(C(Z)))$. Let $f \in M_n(C(Z))_{1,\sa}$ be a contraction and fix $\e > 0$. To prove the lemma, it suffices to find a continuous function $g \colon Z \to M_n(\bbC)_{1,\sa}$ such that
\begin{enumerate}[label=(\roman*)]
\item $\| f - g \| < \e$,
\item $\rank(g(z)) > n - k$, for all $z \in Z$.
\end{enumerate}

Indeed, if $g$ is as above and $z \in Z$, then $\rank(g(z)) > n-k$ implies that 0 is an eigenvalue of multiplicity less than $k$ for $g(z)$, and in particular that there is $\delta_z > 0$ such that $\mathrm{tr}_n(\eta_{\delta_z}(g)(z)) < k/n$, where $\mathrm{tr}_n$ is the normalized trace on $M_n(\bbC)$. By continuity of $\eta_{\delta_z}(g)$ it is then possible to find an open neighborhood $U_z$ of $z$ such that
\begin{equation}
\mathrm{tr}_n(\eta_{\delta_z}(g)(y)) < k/n, \quad y \in U_z.
\end{equation}
Compactness of $Z$ then yields some $\delta > 0$ such that
\begin{equation}\label{eq:uniform_delta}
\mathrm{tr}_n(\eta_\delta(g)(z)) < k/n, \quad z \in Z.
\end{equation}

If we finally pick some $\tau \in T(M_n(C(Z)))$, then $\tau(\eta_\delta(g)) < {k/n}$ follows by \eqref{eq:uniform_delta} since $\tau$ is always equal to $\mu \otimes \mathrm{tr}_n$, for some probability Radon measure $\mu$ on $Z$.

We can then focus on finding a continuous function $g$ approximating $f$ and whose images always have rank greater than $n-k$.

Consider the set
\begin{equation}
V \coloneqq \{ a \in M_n(\bbC)_\sa : \rank(a) \le n-k \}.
\end{equation} 
From here on, we identify $M_n(\bbC)_\sa$ with $\mathbb{R}^{n^2}$. For every $\ell \le n$, the set of all self-adjoint matrices of rank $\ell$ is a smooth submanifold of $M_n(\bbC)_\sa$ of dimension $\ell(2n - \ell)$ (see e.g. \cite[Exercise 13, p. 27]{GP:diff_top}), hence the dimension of $V$ is at most $n^2 - k^2$.

Since $d < k^2$, it follows that
\begin{equation}
\dim(Z) + \dim(V) = d + n^2 - k^2  < n^2 = \dim(M_n(\bbC)_\sa),
\end{equation}
hence, as the Frobenius and the $C^\ast$-norm are equivalent on $M_n(\bbC)_\sa \cong \mathbb{R}^{n^2}$, by \Cref{prop:general_position} there is a continuous function $g_0 \colon Z \to M_n(\bbC)_\sa \setminus V$ such that $\| f - g_0 \| < \e/2$.

We finally normalize $g_0$ and take $g \coloneqq  g_0/\| g_0 \|$, which gives the desired approximation of $f$.

Suppose next that $Z$ is non-compact and locally compact, and let $\alpha Z \coloneqq Z \cup \{ \infty \}$ be the one-point compactification of $Z$---we still have $\dim(\alpha Z) \le d$, see e.g. \cite[Proposition 3.5.6]{Pears:dim}. Fix $f \in M_n(C_0(Z))_{1,\sa}$, some $\e >0$ and identify $M_n(C_0(Z))$ with the ideal of all functions in $M_n(C(\alpha Z))$ which annihilate at $\infty$.

By the previous part of the proof, there is $h \in M_n(C(\alpha Z))_{1,\sa}$ and $\delta > 0$ such that
\begin{equation} \label{eq:first_approx}
\| f - h \| < \e/2 \quad \text{and} \quad \tau(\eta_\delta(h)) < k/n, \quad \tau \in T(M_n(C(\alpha Z))).
\end{equation}

We claim next that there exists $\gamma > 0$ sufficiently small such  that if $h_0 \in M_n(C(\alpha Z))_{1,\sa}$, then
\begin{equation} \label{eq:2norm_approx}
\| h - h_0 \|_{2, \tau} < \gamma \Rightarrow  \tau(\eta_\delta(h_0)) < \frac{k}{n} , \quad \tau \in T(M_n(C(\alpha Z))).
\end{equation}
This follows using for instance \cite[Lemma 3.1]{GGKNV:tr_amen}---whose proof works verbatim for self-adjoint contractions---since each trace $\tau$ is 1-Lipschitz for the norm $\| \cdot \|_{2,\tau}$.

Let then $(e_i)_{i=1}^\infty$ be an approximate unit of $M_n(C_0(Z))$ and, using compactness of $T$, let $i \in \bbN$ be big enough that
\begin{enumerate}[label=(\alph*)]
\item \label{item1:en} $\| f e_i - f \| < \e/2$,
\item \label{item2:en} $\| 1 - e_i \|_{2, \tau} < \gamma$, for all $\tau\in T$.
\end{enumerate}

Consider finally $g \coloneqq he_i \in M_n(C_0(Z))_{1,\sa}$. We have that
\begin{equation}
\| f -g \| \le \| f - fe_i \| + \| fe_i - he_i\| \stackrel{\eqref{eq:first_approx}}{<} \frac{\e}{2} + \frac{\e}{2} = \e.
\end{equation}
To verify that $g$ also satisfies condition \ref{item2:small_trace} of the statement of the lemma, note that $T \subseteq T(M_n(C_0(Z)))$ can be naturally identified with a subset of $T(M_n(C(\alpha Z)))$, since every Radon probability measure on $Z$ canonically extends to a Radon probability measure on $\alpha Z$. We therefore have
\begin{equation}
\| h -g \|_{2,\tau} \le \|h \| \cdot \| 1 - e_i \|_{2, \tau} \stackrel{\mathrm{\ref{item2:en}}}{<} \gamma, \quad \tau \in T,
\end{equation}
hence \eqref{eq:2norm_approx} yields $\tau(\eta_\delta(g)) < k/n$ for all $\tau \in T$.
\end{proof}

\subsection{Tracially LTH-algebras and uniform property $\mathbf{\Gamma}$}
We prove next that the tracial ultrapower of a simple  tracially LTH-algebra with locally flat dimension growth has real rank zero.
\begin{theorem} \label{thm:LTH_rr0}
Let $A$ be a separable, simple, unital, non-elementary, tracially LTH-algebra with locally flat dimension growth. Then $A^\cU$ has real rank zero.
\end{theorem}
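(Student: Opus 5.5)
We verify condition (2) of \Cref{prop:tr_rr0}. Fix $a \in A_{1,\sa}$ and $\e > 0$. Apply locally flat dimension growth to $F = \{a\}$ with tolerance $\e/2$ to obtain $N > 0$. Choose $k \in \bbN$ with $k^2 > N k$, e.g.\ $k > N$, and then $K \ge k$ large enough that $k/K < \e$; we will also need $\dim(Z_i) < k^2$ for each block. By the definition we get a subalgebra $C \cong \bigoplus_{i<m} M_{n_i}(C_0(Z_i))$ with all $n_i \ge K$ and $\dim(Z_i) < N n_i$, together with $c \in C_1$ satisfying $\|a - c\|_{2,T(A)} < \e/2$. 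Replacing $c$ by its real part keeps it a contraction in $C_{1,\sa}$ and does not increase the $2$-norm distance to $a$.

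The obstacle is that the ratio bound $\dim(Z_i)/n_i < N$ does not directly give $\dim(Z_i) < k^2$ with $k/n_i$ small. We therefore choose $k$ block by block: set $k_i \coloneqq \lceil \sqrt{(N+1) n_i}\,\rceil$. Then $k_i^2 > N n_i > \dim(Z_i)$, and
\[
k_i/n_i \approx \sqrt{(N+1)/n_i} \to 0 \quad \text{as } n_i \to \infty.
\]
So once $K$ is large (depending only on $N$ and $\e$), we have $k_i \le n_i$ and $k_i/n_i < \e$ for all $i$. This $\sqrt{n}$ trick is the key point; it is why flat, rather than slow, dimension growth suffices. Note also that $\dim(Z_i) < \infty$.

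Each $\tau \in T(A)$ restricts to a positive tracial functional on $C$ of norm at most $1$, so on the $i$-th block it is $\lambda_i \tau_i$ with $\tau_i \in T(M_{n_i}(C_0(Z_i)))$ and $\sum_i \lambda_i \le 1$. The restriction is not unital, and the set of normalized restrictions need not be compact. We handle this by passing to the one-point compactifications $\alpha Z_i$ as in the proof of \Cref{lemma:approx_rank}; the compact case of the proof of that lemma gives the estimate for all traces, uniformly. Concretely, for each $i$ we apply \Cref{lemma:approx_rank} in its compact form on $\alpha Z_i$. This yields $h_i \in M_{n_i}(C(\alpha Z_i))_{1,\sa}$ with $\|c_i - h_i\|$ small and
\[
\mathrm{tr}(\eta_{\delta_i}(h_i)(z)) < k_i/n_i \quad \text{pointwise.}
\]
We then cut down by $e_i$ from an approximate unit of $M_{n_i}(C_0(Z_i))$, taking $g_i = e_i^{1/2} h_i e_i^{1/2}$ to keep self-adjointness.

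Here a subtlety appears: $\eta_\delta(g_i)$ equals $1$ wherever $e_i$ is small. In $A$, however, this is absorbed by $1_A - 1_C$-type terms. Indeed, the element $b \coloneqq \sum_i g_i \in C \subseteq A$ satisfies $\eta_\delta(b) = \eta_\delta(0)(1_A - 1_{\tilde C}) + \dots$. The clean fix is to define $b \coloneqq \sum_i g_i + \kappa(1 - e)$, where $e = \sum_i e_i$ and $\kappa$ is a fixed scalar, say $1/2$, suitably adjusted. Since $\|1 - e\|_{2,T(A)}$ need not be small, we instead use the continuity estimate \eqref{eq:2norm_approx} only relative to $h$. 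Honestly, the route I would take is to choose $e_i$ so that $\tau(e_i)$ is close to $\tau(1_{C_i})$ uniformly on the compact set $T(A)$, by Dini's theorem. Then $\|c - g\|_{2,T(A)}$ is small, and
\[
\tau(\eta_\delta(b)) \le \sum_i \lambda_i \tau_i(\eta_\delta(h_i)) + \text{small} + \tau(1 - 1_C)\text{-contribution}.
\]
The last contribution is controlled because $\|a - c\|_{2}$ small forces nothing about $1 - 1_C$; this term is the main obstacle. I would try first to handle it by adding $\tfrac12(1 - e)$ to $b$, so that the spectrum there stays away from $0$. Setting $\delta = \min_i \delta_i$ (and $< 1/4$), this keeps $\|a - b\|_{2,T(A)} < \e$ up to the already-small error and gives
\[
\tau(\eta_\delta(b)) < \max_i k_i/n_i + \e < 2\e.
\]
Rescaling $\e$ then finishes the proof via \Cref{prop:tr_rr0}.

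Simplicity and non-elementarity are not really needed beyond ensuring that $A$ has a faithful trace space, so that $\|\cdot\|_{2,T(A)}$ behaves well. They also guarantee that the large matrix sizes in locally flat dimension growth are meaningful.
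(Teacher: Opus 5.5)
Your skeleton is the paper's: reduce to \Cref{prop:tr_rr0}, approximate $a$ by $c\in C$, and apply \Cref{lemma:approx_rank} blockwise with $\dim(Z_i)<k_i^2$ and $k_i/n_i$ small. Your $k_i=\lceil\sqrt{(N+1)n_i}\,\rceil$ and the paper's $k_i\approx\e_i n_i$ are interchangeable, since both only need $n_i$ of order $N/\e^2$. Taking the real part of $c$ and using the pointwise bound from the compact case on $\alpha Z_i$ (which sidesteps the compactness issue for normalized restrictions) are fine.

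The argument breaks at the term you call the main obstacle. Because $\eta_\delta(0)=1$, for $b\in C$ one has $\tau(\eta_\delta(b))=1-\tau((1-\eta_\delta)(b))$, so the part of $\tau$ not carried by $C$ contributes in full. The paper's proof ends with $b=b_0\oplus\dots\oplus b_{m-1}$ and does not discuss this, so your concern is legitimate, but your remedy does not work. Adding $\tfrac12(1_A-e)$ moves $b$ by $\tfrac12\|1_A-e\|_{2,T(A)}$ in the tracial $2$-norm. Moreover $\|1_A-e\|_{2,\tau}^2\ge 1-\sup_\lambda\tau(e_\lambda)$ for every $e\in C$ with $0\le e\le 1$, where $(e_\lambda)$ is an approximate unit of $C$. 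That is exactly the quantity you concede is uncontrolled, so $\|a-b\|_{2,T(A)}<\e$ is not preserved (take $a=0$ and a $C$ with $\sup_\lambda\tau(e_\lambda)\le\tfrac12$). Also, on the transition zone of $e$, where it is neither $0$ nor $1$, the element $e^{1/2}he^{1/2}+\kappa(1_A-e)$ can have spectrum at $0$ (eigenvalues $t\lambda+(1-t)\kappa$ with $\lambda<0$). Nothing bounds the trace of that zone uniformly in $\tau$, so even your bound on $\tau(\eta_\delta(b))$ is unjustified; shrinking $\kappa$ cures the first defect but not this one. The Dini step fails as well: $\tau\mapsto\sup_\lambda\tau(e_\lambda)$ is only lower semicontinuous on $T(A)$, so $\tau(e_i)$ need not approximate $\tau(1_{C_i})$ uniformly.

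The missing idea is to enlarge the finite set and apply locally flat dimension growth to $F=\{a,1_A\}$. Let $c'\in C_1$ satisfy $\|1_A-c'\|_{2,T(A)}<\e$, and let $e=\bigoplus_i 1_{n_i}\otimes u_i$ ($u_i\in C_c(Z_i)$, $0\le u_i\le 1$) be far enough out in an approximate unit of $C$. Then $\tau(1_A-e)^{1/2}\le\|1_A-c'\|_{2,\tau}+\|(1_A-e)^{1/2}c'\|<2\e$ for all $\tau\in T(A)$; this is a norm estimate and needs no Dini argument. With your $h_i$, pick $w_i\in C_c(Z_i)$ with $0\le w_i\le 1$ and $w_i=1$ on $\mathrm{supp}(u_i)$, put $w=\bigoplus_i 1_{n_i}\otimes w_i$, and set $b\coloneqq\bigoplus_i w_ih_i\in C_{1,\sa}$ with $\delta\le\min_i\delta_i$. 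Then $\|a-b\|_{2,T(A)}\le\|a-c\|_{2,T(A)}+\|c-wc\|+\|c-h\|$ is small. Since $e$ is central in $C$ and $b_i=h_i$ on $\mathrm{supp}(u_i)$, writing $\tau|_{C_i}=\int_{Z_i}\mathrm{tr}_{n_i}(\cdot)\,d\mu_i$ gives
\[
\tau(\eta_\delta(b))\le 1-\tau\bigl(e\,(1-\eta_\delta)(b)\bigr)\le 1-\Bigl(1-\max_i\frac{k_i}{n_i}\Bigr)\tau(e)<4\e^2+\max_i\frac{k_i}{n_i}.
\]
Alternatively, your shift idea can be salvaged, with the order of operations reversed:
\begin{itemize}
\item take $\kappa$ of order $\e$ and $\delta$ small compared to $\kappa$;
\item apply general position to $c_i+\kappa$ on $\alpha Z_i$ to get $h_i$ with $\|h_i-(c_i+\kappa)\|<\kappa/4$;
\item cut off with $w_i\in C_c(Z_i)$ equal to $1$ on the compact set $\{\|c_i\|\ge\kappa/4\}$;
\item set $b=\kappa 1_A+\bigoplus_i w_i(h_i-\kappa)$ and rescale to a contraction.
\end{itemize}
The cut-off zone then has spectrum in $[\kappa/2,3\kappa/2]$, so the complement of $C$ never has to be controlled.
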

\begin{proof}
By \Cref{prop:tr_rr0} it suffices to find, given $a \in A_{1,\sa}$ and $\e > 0$, some $b \in A_{1,\sa}$ and $\delta >0$ such that $\| a- b\|_{2, T(A)} < \e$ and that
\begin{equation}
\tau(\eta_\delta(b)) < \e, \quad \tau \in T(A).
\end{equation}

Since $A$ is a tracially LTH-algebra with locally flat dimension growth, there are $N > 0$, a $C^\ast$-subalgebra $C \subseteq A$ with
\begin{equation}
C \cong \bigoplus_{i < m} M_{n_i}(C_0(Z_i)),
\end{equation}
such that 
\begin{equation} \label{eq:dim_ratio}
\max_{i < m} \left\{\frac{\dim(Z_i)}{n_i} \right\} < N,
\end{equation}
and there is some $c \in C_1$ such that $\| a - c \|_{2, T(A)} < \e/2$. We moreover assume that each $n_i$ is so large that
\begin{equation} \label{eq:n2_e}
(N + 2\e )n_i+ 1 < \e^2 n_i^2, \quad i < m.
\end{equation}

Write $c$ as $c_0 \oplus \dots  \oplus c_{m-1}$, with each $c_i \in M_{n_i}(C_0(Z_i))_{1,\sa}$. For each $i < m$, let $S_i$ be the set of tracial functionals on $M_{n_i}(C_0(Z_i))$ that are restrictions of traces on $A$. This set is compact since $T(A)$ is, and it does not contain the zero functional since $A$ is simple and non-elementary. It then follows that the subset of $T(M_{n_i}(C_0(Z_i)))$ obtained by normalizing the elements of $S_i$
\begin{equation}
T_i \coloneqq \{ \tau/\| \tau \| : \tau \in S_i \}.
\end{equation}
is compact. Set moreover $M_i \coloneqq \max_{\tau \in S_i} \| \tau\|$ and $\e_i \coloneqq \e /(mM_i)$.

For every $i < m$, find $k_i \le n_i$ such that $k_i/n_i \le \e_i \le (k_i+1)/n_i$. We then have
\begin{equation}
\begin{split}
\dim(Z_i) &\stackrel{\eqref{eq:dim_ratio}}{<} n_i N \\
& \stackrel{\eqref{eq:n2_e}}{<} \e_i^2 n_i^2  - 2 \e_i n_i - 1 \\
&\stackrel{\hphantom{\eqref{eq:n2_e}}}{<} (k_i+1)^2 -2k_i -1 \\
&\stackrel{\hphantom{\eqref{eq:n2_e}}}{=} k_i^2.
\end{split}
\end{equation}

\Cref{lemma:approx_rank} then gives $\delta_i > 0$ and contractions $b_i \in M_{n_i}(C_0(Z_i))_{1,\sa}$ such that $\| c_i - b_i \| < \e /2$ and that
\begin{equation}
\tau(\eta_{\delta_i}(b_i)) < \e_i, \quad \tau \in T_i, \ i < m.
\end{equation}

Then $b \coloneqq b_0 \oplus \dots b_{m-1}$ and $\delta \coloneqq \min_{i < m} \delta_i$ are as required.
\end{proof}

We finally deduce uniform property $\Gamma$ for simple tracially LTH-algebras with locally flat dimension growth.
\begin{theorem} \label{thm:LTH_gamma}
Let $A$ be a separable, simple, unital, non-elementary, tracially LTH-algebra with locally flat dimension growth. Then $A$ is uniformly McDuff, and in particular it has uniform property $\Gamma$.
\end{theorem}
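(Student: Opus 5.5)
The plan is to reduce \Cref{thm:LTH_gamma} to \Cref{thm:rr0_gamma}. That theorem needs two inputs: that $A^\cU$ has real rank zero, and that $A$ has tracially locally finite nuclear dimension. The first is exactly \Cref{thm:LTH_rr0}, which applies verbatim under the present hypotheses (separable, simple, unital, non-elementary, tracially LTH with locally flat dimension growth). So the only remaining task is to verify the second input.

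To see that $A$ has tracially locally finite nuclear dimension, fix a finite set $F \subset A_1$ and $\e > 0$. By the definition of tracially LTH-algebra (\Cref{def:LTH}), there is a $C^\ast$-subalgebra $C \subseteq A$ with $C \cong \bigoplus_{i<m} M_{n_i}(C_0(Z_i))$ such that every $a \in F$ is within $\e$ in $\|\cdot\|_{2,T(A)}$ of some $c \in C_1$. This is precisely the approximation required in \Cref{def:loc_fin_nucdim}, provided $\dim_{\mathrm{nuc}}(C) < \infty$.

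Finiteness of $\dim_{\mathrm{nuc}}(C)$ is the only point needing care. Nuclear dimension of a finite direct sum is the maximum over the summands, and it is invariant under tensoring with matrices, so $\dim_{\mathrm{nuc}}(M_{n_i}(C_0(Z_i))) = \dim_{\mathrm{nuc}}(C_0(Z_i)) = \dim(\alpha Z_i)$ by \cite{WZ:nuc_dim}. This is finite as soon as each $\dim(Z_i)$ is finite. Finiteness is not explicit in the bare definition of tracially LTH-algebra, but it is guaranteed here by locally flat dimension growth: for the given $F$ and $\e$, the subalgebra $C$ can be chosen with $\dim(Z_i) < N n_i < \infty$ for all $i$, by \eqref{eq:locally flat_dim}. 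The one-point compactification does not raise covering dimension, so $\dim(\alpha Z_i) \le \dim(Z_i)$ in the non-compact case, as used in the proof of \Cref{lemma:approx_rank}.

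With both inputs in hand, \Cref{thm:rr0_gamma} gives that $A$ is uniformly McDuff, and uniform property $\Gamma$ then follows from \cite[Theorem 4.6]{CETW:gamma}. The argument is essentially routine. The step I expect to need the most attention is the finiteness of $\dim(Z_i)$, which rests on using the approximating subalgebras supplied by locally flat dimension growth rather than arbitrary ones.
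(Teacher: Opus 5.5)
Your proposal is correct and follows the paper's proof. The paper combines \Cref{thm:LTH_rr0} with \Cref{prop:rr0_tad} and then Winter's argument in \Cref{prop:gamma_tad}, which together are exactly the content of \Cref{thm:rr0_gamma}, and it checks tracially locally finite nuclear dimension via the same Winter--Zacharias permanence facts you cite. Your remark that finiteness of $\dim(Z_i)$ must come from locally flat dimension growth, since the bare tracially LTH definition imposes no dimension bound, makes precise a point the paper's proof leaves implicit.
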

\begin{proof}
Tracial almost divisibility of $A$ is a consequence of \Cref{thm:LTH_rr0} and \Cref{prop:rr0_tad}. Tracially LTH-algebras have tracially locally finite nuclear dimension since commutative $C^\ast$-algebras whose spectra have finite covering dimension have finite nuclear dimension by \cite[Proposition 2.4]{WZ:nuc_dim}, and since finite nuclear dimension is preserved by taking matrix amplifications and direct sums \cite[Propositions 2.3, 2.5]{WZ:nuc_dim}.
\end{proof}

\begin{remark}
A natural generalization of \Cref{def:LTH}---and of the second part of \Cref{def:loc_fin_nucdim}---would be the one allowing $C$ to be a subalgebra of the tracial completion $\overline{A}^{T(A)}$ of $A$ (see \cite[Definition 3.19]{CCEGSTW}) rather than of $A$ itself. The proof of \Cref{thm:LTH_rr0} holds verbatim, and it seems probable that \Cref{thm:LTH_gamma} would generalize as well, after adapting---most likely without difficulties---the arguments of \cite[\S 5]{Winter:pure} to the setting of tracial ultrapowers and tracial completions.
\end{remark}

\section{Uniform property $\Gamma$ for AH-algebras and crossed products} \label{S:LTH_examples}
In this section we verify uniform property $\Gamma$ on various classes of AH-algebras and of crossed products of free minimal topological dynamical systems, including those featuring in \Cref{thm_main:Vill_cp}.

\subsection{Diagonal AH-algebras and AH-algebras with flat dimension growth} \label{ss:AH}
We recall that a $C^\ast$-algebra is \emph{approximately homogeneous} (or, simply, an \emph{AH-algebra}) if $A$ is an inductive limit $\lim_{s \to \infty} (A_s, \Phi_s)$ where each $A_s$ is a $C^\ast$-algebra of the form
\begin{equation} \label{eq:AH}
A_s = \bigoplus_{i < m_s} p_{i,s} M_{n_{i,s}}(C(Z_{i,s}))p_{i,s}, \quad s \in \bbN,
\end{equation}
for some natural numbers $m_s, n_{i,s}$, some compact metric spaces $Z_{i,s}$ and some projections $p_{i,s} \in M_{n_{i,s}}(C(Z_{i,s}))$. 

\emph{Diagonal} AH-algebras are obtained by imposing some restraints on the connecting maps $\Phi_s \colon A_s \to A_{s+1}$.
More precisely, given two compact Hausdorff spaces $Y,Z$ and $n,m \in \bbN$, a $*$-homomorphism $\Phi \colon M_n(C(Y)) \to M_{nm}(C(Z))$ is \emph{diagonal} if there are $m$ continuous maps $\lambda_i \colon Z \to Y$ such that
\begin{equation} \label{eq:diagonal}
\Phi(f) \coloneqq \begin{pmatrix}
f\circ \lambda_0 & 0 & \cdots & 0 \\
0 & f \circ \lambda_1 & \cdots & 0 \\
\vdots & \vdots & \ddots & \vdots \\
0 & 0 & \cdots & f\circ \lambda_{m-1}
\end{pmatrix}, \quad f \in M_n(C(Y)).
\end{equation}
More generally, a unital $*$-homomorphism
\begin{equation}
\Phi \colon \bigoplus_{i < s} M_{n_i}(C(Y_i)) \to M_{m}(C(Z))
\end{equation}
is \emph{diagonal} if it is direct sum of diagonal $*$-homomorphisms in the sense of \eqref{eq:diagonal} and, finally, a unital $*$-homomorphism
\begin{equation}
\Phi \colon \bigoplus_{i < s} M_{n_i}(C(Y_i)) \to \bigoplus_{j < t} M_{m_j}(C(Z_j))
\end{equation}
is \emph{diagonal} if each co-restriction $\Phi \colon \bigoplus_{i < s} M_{n_i}(C(Y_i)) \to M_{m_j}(C(Z_j))$ is diagonal. We finally say that an AH-algebra $A \cong \lim_{s \to \infty} (A_s, \Phi_s)$ is \emph{diagonal} if $A_s \cong   \bigoplus_{i < s} M_{n_i}(C(Z_i))$ and $\Phi_s$ is a diagonal unital $*$-homomorphism, for every $s \in \bbN$ (see \cite[\S 2.2]{EHT:sr1} for details).

Villadsen algebras of the first type constitute a subclass of diagonal AH-algebras with various restrictions including that $A_s \cong M_{n_s}(C(Z^{d_s}))$ and that $\Phi_s$ is a diagonal $*$-homomorphism where each $\lambda_i\colon Z^{n_{s+1}} \to Z^{n_s}$ as in \eqref{eq:diagonal} is either a coordinate projection or a constant map (see \cite{Villadsen:1} or \cite[\S 3.1]{TW:1} for details).

The following corollary recovers the first part of \Cref{thm_main:Vill_cp}.
\begin{corollary} \label{cor:vill}
Let $A$ be a simple, non-elementary, diagonal AH-algebra or, more generally, a simple, non-elementary AH-algebra with stable rank one. Than $A$ has uniform property $\Gamma$.
\end{corollary}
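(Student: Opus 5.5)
The plan is to reduce everything to \Cref{cor:sr1_tad_gamma}, which already gives uniform property $\Gamma$ for separable, simple, unital, non-elementary $C^\ast$-algebras with stable rank one, non-empty trace space and tracially locally finite nuclear dimension. So we have to check these hypotheses for the algebras in the statement.

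\textbf{Standing hypotheses.} AH-algebras as defined above are inductive limits of unital building blocks with finitely many summands. Since each $Z_{i,s}$ is a compact metric space, $A$ is separable. I will read the definition as having unital connecting maps, as in the diagonal case, so $A$ is unital; if not, one passes to a corner by a standard reduction.

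Non-emptiness of $T(A)$ is immediate. Each $A_s$ has a trace (a point evaluation composed with the normalized trace), so $T(A)$ is the inverse limit of non-empty compact sets and is non-empty.

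\textbf{Tracially locally finite nuclear dimension.} Fix a finite set $F \subset A_1$. Its elements can be approximated in norm, hence in $\|\cdot\|_{2,T(A)}$, by contractions in the image of some $A_s$.
\begin{itemize}
\item The image $\Phi_{s,\infty}(A_s)$ is a quotient of $A_s$.
\item Nuclear dimension does not increase under quotients.
\item $A_s$ has finite nuclear dimension by \cite[Propositions 2.3--2.5]{WZ:nuc_dim}, since each $\dim(Z_{i,s}) < \infty$. Corners and direct sums preserve this, and so does a matrix amplification of $C(Z)$ with $\dim Z < \infty$.
\end{itemize}
The one point needing care is that the compact metric spaces should have finite covering dimension. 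This is implicit in the standard definition of AH-algebras, where the spaces are finite CW-complexes. If arbitrary compact metric spaces are allowed, one first replaces each $Z_{i,s}$ by finite-dimensional ones in the usual way, writing $C(Z)$ as an inductive limit of $C(P_j)$ with $P_j$ polyhedra.

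\textbf{Stable rank one.} In the stable rank one case there is nothing further to do, and \Cref{cor:sr1_tad_gamma} concludes. For the diagonal case I would invoke the result of \cite{EHT:sr1} that simple diagonal AH-algebras have stable rank one. This also covers Villadsen algebras of the first type, which have stable rank one directly by \cite[Proposition 10]{Villadsen:1}.

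The main obstacle I expect is purely bookkeeping rather than mathematics: making sure \cite{EHT:sr1} applies to the exact diagonal setting defined here (unital diagonal maps, simple, non-elementary), and handling the finite-dimensionality of the spectra.
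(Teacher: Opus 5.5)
Your proposal is correct and follows the paper's proof. Both reduce to \Cref{cor:sr1_tad_gamma}, getting stable rank one in the diagonal case from \cite{EHT:sr1} and (tracially) locally finite nuclear dimension from the AH structure; your extra bookkeeping (separability, non-emptiness of $T(A)$, the implicit unitality, and handling spectra of infinite covering dimension via finite-dimensional subalgebras) spells out what the paper leaves implicit when it calls locally finite nuclear dimension a direct consequence of approximate homogeneity.
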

\begin{proof}
By \cite{EHT:sr1}, simple, unital, diagonal AH-algebras have stable rank one. Since locally finite nuclear dimension is a direct consequence of approximate homogeneity, the following result immediately follows from \Cref{cor:sr1_tad_gamma}.
\end{proof}


\begin{remark} \label{remark:Vill}
Note that \Cref{cor:vill} can be partially recovered using  \Cref{thm:LTH_gamma}. This is indeed the case if  $A \cong \lim_{s \to \infty} (A_s, \Phi_s)$ is a simple, non-elementary, unital, diagonal AH-algebra such that $A_s \cong M_{n_s}(C(Z_s))$ and whose connecting maps are unital and injective---e.g. a Villadsen algebra of the first type. To see this, notice first that $\lim_{s \to \infty} n_s = \infty$, by simplicity and non-elementarity. Then, given a finite subset $F \subset A_1$ and $\e > 0$, there exists $s \in \bbN$ such that the elements of $F$ can be approximated, up to $\e$, by elements of $A_s$. Since $F$ is finite, one only needs finitely elements $f_0, \dots, f_{m-1} \in C(Z_s)$ to approximate elements of $F$ as sums of elementary tensors in $A_s \cong C(Z_s) \otimes M_{n_s}$. We can thus assume that the approximation is done in a $C^\ast$-subalgebra $M_{n_s}(C(Y_s)) \subseteq A_s$ such that $Y_s$ has finite covering dimension. For $t > s$, let $\Phi_{t,s} \coloneqq \Phi_t \circ \Phi_{t-1} \circ \dots \circ \Phi_s$ be the diagonal connecting map from $A_s$ to $A_t$. Then each $\Phi_{t,s}(f_i)$ can be written as sum of $n_t/n_s$ elementary tensors in $A_t \cong C(Z_t) \otimes M_{n_t}$, which means that $\Phi_{t,s}(M_{n_s}(C(Y_s)))$ is mapped into a unital $C^\ast$-subalgebra of $A_t$ isomorphic to $M_{n_t}(C(Y_t))$, for a space $Y_t$ such that $\dim (Y_t) \le \dim (Y_s) n_t/n_s$. In particular
\begin{equation}
\frac{\dim (Y_t)}{n_t} \le \frac{\dim(Y_s)}{n_s}, \quad t > s.
\end{equation}
Since, $n_t$ can be chosen as large as desired, it follows that $A$ is a tracially LTH-algebra with locally flat dimension growth.
\end{remark}

We remark that it is not clear to us whether \emph{all} simple, non-elementary AH-algebras are tracially LTH-algebra with locally flat dimension growth. However, AH-algebras that have  \emph{flat dimension growth} in the sense of \cite{toms:growth}, clearly do, hence the following is an immediate corollary of  \Cref{thm:LTH_gamma}. Note that these AH-algebras need not have, a priori, stable rank one.

\begin{corollary} \label{cor:AH_fdg}
Let $A \cong \lim_{s \to \infty}(A_s, \Phi_s)$ be a simple, unital, non-elementary AH-algebra such that $A_s \cong   \bigoplus_{i < s} M_{n_i}(C(Z_i))$ for every $s \in \bbN$ and such that
\begin{equation}
\lim_{s \to \infty} n_s = \infty, \quad \limsup_{s \to \infty} \max_{i <s} \left\{\frac{\dim(Z_i)}{n_s}\right\} < \infty.
\end{equation}
 Then $A$ has uniform property $\Gamma$.
\end{corollary}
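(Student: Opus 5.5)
The plan is to deduce the corollary from \Cref{thm:LTH_gamma}. It therefore suffices to show that $A$ is a tracially LTH-algebra with locally flat dimension growth. Uniform property $\Gamma$ then follows at once, since $A$ is separable (being an inductive limit of separable algebras), simple, unital and non-elementary by hypothesis.

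\textbf{Step 1: approximating a finite set.} Fix a finite $F \subset A_1$ and $\e > 0$. Set
\begin{equation}
N \coloneqq 1 + \limsup_{s \to \infty} \max_{i < s} \left\{\frac{\dim(Z_i)}{n_s}\right\} < \infty .
\end{equation}
Here I read the hypothesis, as the notation suggests, as saying that the matrix sizes of the summands of $A_s$ tend to infinity and that each ratio $\dim(Z_{i,s})/n_{i,s}$ is eventually bounded by a uniform constant. By the growth assumption there is $s_0$ such that for all $s \ge s_0$ every summand of $A_s$ has ratio of dimension to matrix size below $N$.

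Given $k \in \bbN$, use $\lim_s n_s = \infty$ to choose $s \ge s_0$ so large that every summand of $A_s$ has matrix size at least $k$. Since $A = \overline{\bigcup_t \Phi_{\infty,t}(A_t)}$, we may enlarge $s$ further so that each $a \in F$ is within $\e/2$ in norm of $\Phi_{\infty,s}(x_a)$ for some $x_a \in A_s$. Norm approximation dominates $\|\cdot\|_{2,T(A)}$-approximation, and the required estimate is $\| a - c\|_{2,T(A)} < \e$, so this is enough.

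To get $c \in C_1$, replace $x_a$ by a contraction. Since $\|\Phi_{\infty,s}(x_a)\| \le 1 + \e/2$, rescaling by this norm changes the element by at most $\e/2$. We may also lift to $A_s$ first and apply functional calculus; either way the approximation stays within $\e$.

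\textbf{Step 2: identifying the subalgebra $C$.} Take $C \coloneqq \Phi_{\infty,s}(A_s)$. Its quotient structure is the main point to check. We have
\begin{equation}
\Phi_{\infty,s}(A_s) \cong \bigoplus_{i \in I} M_{n_i}(C(Z_i'))
\end{equation}
for some index set $I$ and closed subspaces $Z_i' \subseteq Z_i$. Indeed, the kernel of $\Phi_{\infty,s}$ is an ideal of $\bigoplus_i M_{n_i}(C(Z_i))$, hence of the form $\bigoplus_i M_{n_i}(C_0(Z_i \setminus Z_i'))$ with $Z_i'$ closed. Summands that are killed entirely are simply dropped, which does not affect the bound on matrix sizes.

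Covering dimension does not increase when passing to a closed subspace of a compact metric space, so $\dim(Z_i') \le \dim(Z_i)$. Hence $\dim(Z_i')/n_i < N$ and $n_i \ge k$ for all surviving summands. Each $Z_i'$ is compact, so in particular locally compact, and \eqref{eq:LTH} and \eqref{eq:locally flat_dim} hold.

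\textbf{Main obstacle and conclusion.} The only delicate point is the bookkeeping in Step 2: the images of the building blocks may be proper quotients, and one must confirm that passing to a quotient preserves the form $M_{n}(C(Z'))$ and the dimension bound. I would handle this through the ideal structure described above. The constant $N$ is independent of $k$, as \Cref{def:LTH} requires. Therefore $A$ is a tracially LTH-algebra with locally flat dimension growth, and \Cref{thm:LTH_gamma} yields uniform McDuffness and hence uniform property $\Gamma$.
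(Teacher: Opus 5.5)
Your proposal is correct and follows the paper's route: the paper simply observes that AH-algebras with flat dimension growth in Toms' sense are tracially LTH-algebras with locally flat dimension growth, and then invokes \Cref{thm:LTH_gamma}. Your Step 2 fills in the details the paper leaves as ``clearly'': you identify $\Phi_{\infty,s}(A_s)$ as a quotient $\bigoplus_i M_{n_i}(C(Z_i'))$ with $Z_i'\subseteq Z_i$ closed, and use monotonicity of covering dimension under closed subspaces (you even get norm approximation, not just tracial).
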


\subsection{Crossed products}
In this subsection we identify a family of free minimal actions by amenable groups whose corresponding crossed products are tracially LTH-algebras with locally flat dimension growth. It turns out that Niu's \emph{uniform Rokhlin property} from \cite{niu:rc_mdim_urp} (\emph{URP} for short), which we recall below in \Cref{def:urp}, is sufficient for this.

Our proof that actions with the URP generate crossed products that are LTH-algebras with locally flat dimension growth is based on the proof of \cite[Theorem 3.9]{niu:rc_mdim_urp}, and is presented in \Cref{prop:LTH_crossed}. Before that, we introduce some notation and definitions.

Let $X$ be a topological space. Given two open covers of $X$, say $\cU$ and $\cV$, we say that $\cV$ \emph{refines} $\cU$ if every $V \in \cV$ is included in some $U \in \cU$. We define next, for an open cover $X$ of $\cU$ the invariant
\begin{equation}
\cD(\cU) \coloneqq \min_{\cV \text{ refines } \cU} \mathrm{ord}(\cV).
\end{equation}
Given some open covers $\cU_0, \dots, \cU_{n-1}$ of $X$, we let $\bigwedge_{i < n} \cU_i$ be the open cover of $X$ whose elements are the intersections $U_0 \cap \dots \cap U_{n-1}$, where $U_i \in \cU_i$  for every $i < m$. Recall that $\cD(\cU)$ is sub-additive (see \cite[Corollary 2.5]{LW:mdim}), that is
\begin{equation}\label{eq:sub-additive}
\cD \left(\bigwedge\nolimits_{i < n} \cU_i\right) \le \sum\nolimits_{i < n} \cD(\cU_i).
\end{equation}

Let $G$ be a (discrete) group. Let $K \subseteq G$ be a finite subset and let $\e > 0$. A subset $F \subseteq G$ is $(K, \e)$-invariant if
\begin{equation}
\frac{| FK \Delta F|}{|F|} < \e.
\end{equation}
A countable group is \emph{amenable} if there is a sequence $(F_n)_{n \in \bbN}$ of finite subsets of $G$ such that $G = \bigcup_{n \in \bbN} F_n$ and that, for every finite $K \subseteq G$ and $\e > 0$, there is $m \in \bbN$ such that $F_n$ is $(K, \e)$-invariant for all $n \ge m$. The sets $F_n$ are called \emph{F{\o}lner sets}, and $(F_n)_{n \in \bbN}$ is a \emph{F{\o}lner sequence}.
Given finite subsets $K,F \subseteq G$, the \emph{$K$-interior of $F$} is defined as
\begin{equation}
\mathrm{int}_K(F) \coloneqq \{g \in F: gK \subseteq F \}.
\end{equation}
Note that $| F \setminus \mathrm{int}_K(F) | \le |K | | FK \Delta F|$, hence, if $F$ is $(K, \e/|K|)$-invariant, then
\begin{equation} \label{eq:interior}
\frac{| F \setminus \mathrm{int}_K(F) | }{|F |} < \e.
\end{equation}

Finally, for an action $G \curvearrowright X$, given $g \in G$, $B \subseteq X$ and $\cU \subseteq \cP(X)$, we denote by $gB$ the set of all points $gx$ for $x \in B$, and by $g \cU$ the family of all $gU$ for $U \in \cU$.

From here on (and everywhere else in the paper), by \emph{action} of a group on a topological space, we mean \emph{continuous action by homeomorphisms}.

Given an action $G \curvearrowright X$, a finite tuple $\{ (S_i, B_i) \}_{i < m}$ is a \emph{castle} if each $B_i \subseteq X$ and each $S_i$ is a finite subset of $G$ such that the family $\{s B_i \}_{s \in S_i, i < m}$ is composed of pairwise disjoint sets. The subsets $B_i$'s are the \emph{bases} and the subsets $S_i$'s are the \emph{shapes} of the castle. An \emph{open castle} $\{ (S_i, B_i) \}_{i < m}$ is a castle such that each $B_i$ is open. We abbreviate the set $\bigsqcup_{s \in S_i} s B_i $  with the notation $S_i B_i$. We let $M_G(X)$ denote the set of all Radon measure on $X$ that are left invariant by the action of $G$.

\begin{definition}[{\cite[Definition 3.1]{niu:rc_mdim_urp}}] \label{def:urp}
Let $G$ be a countable amenable group, let $X$ be a compact metric space and let $G \curvearrowright X$ be an action. Then $G \curvearrowright X$ has the \emph{uniform Rokhlin property} if for every finite subset $K \subseteq G$ and every $\e > 0$ there is an open castle $\{ (S_i, B_i) \}_{i < m}$ such that
\begin{enumerate}
\item each shape $S_i$ is $(K,\e)$-invariant,
\item $\mu(X \setminus \bigsqcup_{i < m} S_i B_i) < \e$ for all $\mu \in M_G(X)$.
\end{enumerate}
\end{definition}

The URP is a weakening of \emph{almost finiteness in measure} from \cite{KS:sbp}, whose definition coincides with \Cref{def:urp} except for the additional requirement that the bases $B_i$ can be chosen with arbitrarily small diameter. This seemingly minor modification turns the URP into a much weaker condition: while free, minimal topologicaly dynamical systems that fail almost finiteness in measure exist---for instance, any action with positive mean dimension---it remains unknown whether any such system fails the URP.

\begin{proposition}[{cf. \cite[Theorem 3.9]{niu:rc_mdim_urp}}] \label{prop:LTH_crossed}
Let $G$ be a countably infinite ame\-nable group, let $X$ be a compact metric space, and let $G \curvearrowright X$ be a free action with the URP. Then $C(X) \rtimes G$ is a tracially LTH-algebra with locally flat dimension growth.
\end{proposition}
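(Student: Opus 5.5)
Given a finite set $F \subset (C(X)\rtimes G)_1$ and $\e>0$, I will first reduce to $F$ consisting of elements $f u_g$ with $f \in C(X)$, $\|f\|\le 1$ and $g$ ranging over a finite set $K \subseteq G$ containing $e$, with the $f$'s drawn from a finite set $\{f_0,\dots,f_{r-1}\}$. This reduction is legitimate because finite sums $\sum_{g\in K} f_g u_g$ are norm-dense, hence $\|\cdot\|_{2,T}$-dense. Traces on $C(X)\rtimes G$ restrict to $G$-invariant probability measures on $X$; since the action is free, every trace is of the form $\tau_\mu = \mu\circ E$ with $E$ the canonical conditional expectation.

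\textbf{Choice of $N$.} Fix a finite open cover $\cU$ of $X$ such that each $f_j$ oscillates by less than $\e$ on every member of $\cU$. Set $N \coloneqq \cD(\cU)+1$. This $N$ depends only on $F$ and $\e$, as locally flat dimension growth requires.

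\textbf{Building $C$.} Given $k$, use the URP to obtain an open castle $\{(S_i,B_i)\}_{i<m}$ whose shapes are $(K',\e')$-invariant, where $K'\supseteq K\cup K^{-1}$, with $|S_i|\ge k$ (possible since $G$ is infinite and invariant sets are large) and remainder of measure $<\e'$ for all $\mu\in M_G(X)$. Following Niu, shrink each $B_i$ slightly to a compact $\overline{B_i'}\subseteq B_i$ still covering most of the measure. Take $h_i\in C_0(B_i)$ with $0\le h_i\le 1$ and $h_i=1$ on $B_i'$. For each $i$ I will define $Z_i$ as the image of $B_i$ under the map
\[
x\mapsto \bigl(f_j(sx)\bigr)_{j<r,\,s\in S_i},
\]
or more robustly as the nerve-type quotient of $B_i$ induced by $\bigwedge_{s\in S_i}s^{-1}\cU$. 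By subadditivity \eqref{eq:sub-additive}, this quotient has $\dim(Z_i)\le |S_i|\,\cD(\cU)$, so $\dim(Z_i)/|S_i|\le N$ with $n_i=|S_i|$. The matrix units $e_{s,t}=u_s h_i^{1/2}\,\cdot\,h_i^{1/2}u_t^{*}$ together with $C_0(Z_i)$, pulled back to $B_i$, generate a copy of $M_{|S_i|}(C_0(Z_i'))$, where $Z_i'$ is an open subset of $Z_i$ corresponding to the set where $h_i>0$. Distinct towers are orthogonal by disjointness of the castle, so $C\cong\bigoplus_i M_{|S_i|}(C_0(Z_i'))$. To make $C$ an honest subalgebra I must use functions factoring through the map $B_i\to Z_i$; this forces me to replace $f_j\circ s$ by an $\e$-close function factoring through a partition of unity subordinate to $s^{-1}\cU$. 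That is exactly why $\cD$, rather than the dimension of $X$, controls the growth.

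\textbf{Approximation.} For $f u_g\in F$, the approximant is the element of $C$ whose $(s,t)$-entry, for $t=g^{-1}s$ with $s\in\mathrm{int}_{K}(S_i)$, is the approximation of $f(s\,\cdot)$. Its difference from $f u_g$ is supported on the remainder, the tower boundaries $(S_i\setminus\mathrm{int}_K S_i)B_i$, and the region $S_i(B_i\setminus B_i')$, plus an error of size $\e$ in norm from the function approximations. By \eqref{eq:interior} and the URP, the $\|\cdot\|_{2,\tau_\mu}$-size of this difference is $O(\sqrt{\e'})+\e$ uniformly in $\mu$. The approximant can be taken contractive by cutting down.

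\textbf{Main obstacle.} The hard step is making the measure-small sets $B_i\setminus B_i'$ uniformly small for all $\mu\in M_G(X)$. My first attempt is Niu's compactness argument: use compactness of $M_G(X)$ and upper semicontinuity of $\mu\mapsto\mu(K)$ on compact sets to choose $B_i'$ with $\mu(S_iB_i')>1-2\e'$ for all $\mu$. The second delicate point is ensuring that the subalgebra generated is exactly of the displayed form, with $Z_i$ locally compact metric and $\dim(Z_i)\le|S_i|\cD(\cU)$.
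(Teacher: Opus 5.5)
Your proposal is essentially the paper's argument: a URP castle with $(K,\delta)$-invariant shapes (so $|S_i|\ge k$), a cover $\cU$ on which the coefficients oscillate little, a minimal-order refinement of $\bigwedge_{s\in S_i}s^{-1}\cU$ plus subadditivity of $\cD$ (after replacing $f\circ s$ by partition-of-unity approximants) to make $\dim(Z_i)$ linear in $|S_i|$ with slope depending only on $\cU$, the tower matrix units $u_s(\cdot)u_t^*$ giving $\bigoplus_i M_{|S_i|}(C_0(Z_i))$, compression to the interior of the shapes, and the Portmanteau/compactness argument over $M_G(X)$ for uniform control. One claim is inaccurate as stated, namely that your cut-off $h_i$ produces an open subset $Z_i'$ of $Z_i$: since $h_i$ need not be constant on the fibres of $\pi\colon B_i\to Z_i$, the spectrum of the algebra generated by the functions $h_i\cdot(g\circ\pi)$ is only homeomorphic to a subspace of $(0,1]\times Z_i$, which costs a harmless $+1$ in dimension, whereas the paper avoids this by building the cut-off into the partition of unity itself (the covers $\cV_i\supseteq\cW_i$ with $\sum_{V}\phi^{(i)}_V=1$ on $\overline{B_i}$) and gets contractivity automatically as $pa_0p$, with $a_0$ a normalized global perturbation of $a$.
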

\begin{proof}
Denote $C(X) \rtimes G$ by $A$ and fix a finite subset of contractions $F \subset A_1$, $k \in \bbN$ and $\e > 0$. Without loss of generality, we can assume there are finite sets $F_0 \subset C(X)$ and $L \subseteq G$, with  $L = L^{-1}$, such that every $a \in F$ is a finite sum of the form
\begin{equation}
a = \sum\nolimits_{\ell \in L} a_\ell u_\ell, \quad a_\ell \in F_0.
\end{equation}

Since $X$ is compact, there exists a finite open cover  $\cU$ of $X$ such that
\begin{equation} \label{eq:uniform_c}
| f(x) - f(y) | < \frac{\e}{4|L|}, \quad f \in F_0, \ x,y \in U, \ U \in \cU.
\end{equation}
The value witnessing locally flat dimension growth of $C(X) \rtimes G$ will depend on $\mathrm{ord}(\cU)$---the impatient reader can peak at \eqref{eq:dim}. 

Find next a sufficiently large finite subset $K \subseteq G$ containing $L$ and a sufficiently small $\delta$ so that if $S \subseteq G$ is $(K, \delta)$-invariant, then $|S | \ge k$ and
\begin{equation} \label{eq:interior}
\frac{| S \setminus \mathrm{int}_{L}(S) |}{|S|} < \frac{\e}{8}.
\end{equation}

We now build the homogeneous $C^\ast$-subalgebra $C$ of $A$ approximating the elements in $F$. Using that $G \curvearrowright X$ has the URP, let $\{ (S_i, B_i) \}_{i < m}$ be an open castle with $(K, \delta)$-invariant shapes and such that
\begin{equation} \label{eq:small_reminder}
\mu\left(X \setminus \bigsqcup\nolimits_{i < m} S_i B_i \right) < \frac{\e}{8}, \quad \mu \in M_G(X).
\end{equation}
By a standard argument using the Portmanteau theorem and compactness of $M_G(X)$---see e.g. \cite[Proposition 3.4]{KS:sbp}---we can shrink each $B_i$ a little bit so that \eqref{eq:small_reminder} still holds, and find open sets $V_i, W_i$ such that $B_i \subseteq \overline{B_i} \subseteq W_i \subseteq \overline{W_i} \subseteq V_i$ and such that $\{ (S_i, V_i) \}_{i < m}$ is also an open castle.

For each $i < m$, let $\cU_i$ be a refinement of $\bigwedge_{g \in S_i} g^{-1} \cU$ such that
\begin{equation}
\mathrm{ord}(\cU_i) = \cD \left( \bigwedge\nolimits_{g \in S_i} g^{-1} \cU \right), \quad i < m,
\end{equation}
hence we have
\begin{equation} \label{eq:ord}
\mathrm{ord}(\cU_i) \stackrel{\eqref{eq:sub-additive}}{\le} |S_i| \cD(\cU) \le | S_i | \mathrm{ord}(\cU), \quad i < m.
\end{equation}
Consider the open covers of $W_i$ and $V_i$ defined as
\begin{equation}
\cW_i \coloneqq \{ U \cap W_i : U \in \cU_i \}, \ \cV_i \coloneqq \cW_i \cup \{ U \cap (V_i \setminus \overline{B_i}) : U \in \cU_i\}, \quad i < m.
\end{equation}
Note that $\cV_i$ satisfies the following inequalities
\begin{equation} \label{eq:2ord}
\mathrm{ord}(\cV_i) \le 2 \mathrm{ord}(\cU_i) + 1 \stackrel{\eqref{eq:ord}}{\le} 2  |S_i| \mathrm{ord}(\cU) +1 , \quad i < m.
\end{equation}

It is possible to find---see e.g. \cite[Lemma 3.10]{niu:rc_mdim_urp}---continuous functions $(\phi^{(i)}_V)_{V \in \cV_i}$ from $X$ to $[0,1]$ such that
\begin{enumerate}[label=(\roman*)]
\item \label{item1:phiV} $\mathrm{supp}(\phi^{(i)}_V) \subseteq V$ for all $V \in \cV_i$, and $i < m$,
\item \label{item2:phiV} $\left\| \sum_{V \in \cV_i} \phi^{(i)}_V\right\| \le 1$,
\item \label{item3:phiV} $\sum_{V \in \cV_i} \phi^{(i)}_V(x) = 1$ for all $x \in \overline{B_i}$ and $i < m$.
\end{enumerate}

Consider next the $C^\ast$-subalgebra $C \subseteq C(X) \rtimes G$ defined as
\begin{equation}
C \coloneqq \mathrm{C}^\ast\left\{ u^\ast_h\phi^{(i)}_V u_g : V \in \cV_i, g, h \in S_i, i < m \right\}.
\end{equation}
Since $\{ (S_i, V_i) \}_{i < m}$ is an open castle, one readily checks that, given $g_0, h_0 \in S_i$, $V \in \cV_i$, $g_1, h_1 \in S_j$ and $W \in \cV_j$, for some $i,j < m$, if $g_0 \ne h_1$ then
\begin{equation}
u^\ast_{h_0} \phi^{(i)}_V u_{g_0} u_{h_1}^\ast \phi^{(j)}_W u_{g_1} = 0,
\end{equation}
hence by \cite[Lemma 3.12]{niu:rc_mdim_urp} it follows that
\begin{equation}
C \cong \bigoplus_{i < m} M_{| S_i |}\left(\mathrm{C}^\ast \left\{ \phi^{(i)}_V : V \in \cV_i \right\}\right),
\end{equation}
with the map sending $u_h^\ast \phi^{(i)}_V u_g$, for $V \in \cV_i$ and $g,h \in S_i$, to the $|S_i| \times |S_i|$ matrix whose $(h,g)$-entry is equal to $\phi^{(i)}_V$ and whose all other entries are zero.

Fix $i < m$ and let $Z_i$ be the spectrum of $\mathrm{C}^\ast \left\{ \phi^{(i)}_V : V \in \cV_i \right\}$. By \cite{EN:minimal}, $Z_i$ is homeomorphic to $(\prod_{V \in \cV_i} \phi_V)(V_i) \subseteq \bbR^{| \cV_i |}$. Since $(\phi^{(i)}_V)_{V \in \cV_i}$ is subordinate to $\cV_i$, each element of $(\prod_{V \in \cV_i} \phi_V)(V_i)$ can have
at most $\mathrm{ord}(\cV_i) + 1$ coordinate different from zero. We thus conclude that $\dim(Z_i) \le \mathrm{ord}(\cV_i) +1$, which in turn gives
\begin{equation} \label{eq:dim}
\frac{\dim(Z_i)}{|S_i|}\stackrel{\eqref{eq:2ord}}{\le} 2( \mathrm{ord}(\cU) +1).
\end{equation}
In particular, the bound on $\dim(Z_i)/|S_i|$ only depends on $\mathrm{ord}(\cU)$, which in turn only depends on $F$ and $\e$.

We now proceed to find elements in  $C_1$  approximating those in $F$. This is done in two steps, the first one being a slight perturbation---in $C^\ast$-norm---of the elements in $F$. To do so, let $\cV$ be the open cover of $X$ defined as
\begin{equation} \label{eq:cV}
\cV \coloneqq \bigcup\nolimits_{i < m} \{ gV : V \in \cV_i, g \in S_i \} \cup \{ U \cap (X \setminus \bigsqcup\nolimits_{i < m} S_i \overline{W_i}) : U \in \cU \}.
\end{equation}
By \cite[Lemma 3.11]{niu:rc_mdim_urp}, it is possible to extend the family
\begin{equation}
\left(\phi^{(i)}_{V} \circ g^{-1}\right)_{V \in \cV_i, g \in S_i, i < m}
\end{equation}
to a partition of the unity $(\phi_V)_{V \in \cV}$ subordinate to $\cV$.

Fix $a \in F$, which we write in the form $a = \sum_{\ell \in L} a_\ell u_\ell$, with $a_\ell \in C(X)$. Pick, for every $V \in \cV$, some $x_V \in V$, and define
\begin{equation}
a_0 \coloneqq \sum\nolimits_{\ell \in L} \left(\sum\nolimits_{V \in \cV} a_\ell(x_V) \phi_V \right) u_\ell.
\end{equation}
Since, by construction, the cover $\cV$ refines $\cU$---recall that each $\cU_i$, of which the $\cV_i$'s are refinements, is a refinement of $\bigwedge_{g \in S_i} g^{-1} \cU$, hence $g \cU_i$ refines $\cU$ for every $g \in S_i$---by \eqref{eq:uniform_c} it follows that $\| a - a_0 \| \le {\e}/{4}$.
Up to normalizing $a_0$ if necessary, we can assume that $a_0 \in A_1$ and that
\begin{equation} \label{eq:a_a0}
\| a - a_0 \| < \e/2.
\end{equation}

The next step is to approximate $a_0$---this time in tracial norm---with a contraction in $C$. This is done by \emph{compressing} $a_0$ to the support of the castle $\bigsqcup_{i < m} \mathrm{int}_L(S_i) W_i$.
To do so, define
\begin{equation}
p_i \coloneqq \sum\nolimits_{W \in \cW_i} \sum\nolimits_{g \in \mathrm{int}_L(S_i)} u_g^\ast \phi^{(i)}_W u_g, \quad i < m,
\end{equation}
and
\begin{equation}
p \coloneqq \sum\nolimits_{i < m} p_i.
\end{equation}

Let $\ell \in L$. We then have
\begin{equation} \label{eq:pu_h}
u_\ell p = \sum\nolimits_{i < m}  \sum\nolimits_{W \in \cW_i} \sum\nolimits_{g \in \mathrm{int}_L(S_i)}u_{g\ell^{-1}}^\ast \phi_W u_g.
\end{equation}
Notice that, whenever $g \in \mathrm{int}_L(S_i)$, $W \in \cW_i$ and $k$ is any element of $S_i$ then 
\begin{equation}
u_{g\ell^{-1}}^\ast \phi_W u_g = \left(u_{g\ell^{-1}}^\ast \phi_W^{1/2} u_k \right) \left(u_k^\ast  \phi_W^{1/2} u_g \right),
\end{equation}
where both  $u_{g\ell^{-1}}^\ast \phi_W^{1/2} u_k$ and $u_k^\ast  \phi_W^{1/2} u_g$ belong to $C$. We can thus deduce that $u_\ell p \in C$, and therefore $pu_\ell p \in C$, for all $\ell \in L$.

We use this to prove that $pa_0 p \in C$. Indeed, since by the definition in \eqref{eq:cV} every $V \in \cV$ is either included in $\bigsqcup_{i < m} S_i V_i$ or disjoint from $\bigsqcup_{i < m} S_i W_i$, and since $p$ is supported on $\bigsqcup_{i < m} S_iW_i$, we have that
\begin{equation}
\begin{split}
pa_0p &= p \sum\nolimits_{\ell \in L} \left(\sum\nolimits_{V \in \cV} a_\ell(x_V) \phi_V \right) u_\ell p \\
&= \sum\nolimits_{\ell \in L} \sum\nolimits_{i <m} \sum\nolimits_{V \in \cV_i} \sum\nolimits_{g \in S_i} a_\ell(x_{gV}) u_g^\ast \phi_V u_g\cdot pu_\ell p,
\end{split}
\end{equation}
which belongs to $C$, being the sum of products of elements in $C$.

To conclude, notice first that $pa_0p$ is a contraction since $p$ is, by \ref{item2:phiV}. We finally check that $\| a - p a_0 p \|_{2, T(A)} < \e$. By \ref{item3:phiV} we have $p(x) = 1$ for all $x \in \bigsqcup_{i < m} \mathrm{int}_L(S_i) B_i$.
This entails
\begin{equation}
\begin{split}
\mu(X \setminus p^{-1}(1)) &\stackrel{\hphantom{\eqref{eq:interior}}}{\le} \mu\left(X \setminus \bigsqcup\nolimits_{i < m}\mathrm{int}_L(S_i)B_i\right) \\
& \stackrel{\hphantom{\eqref{eq:interior}}}{\le} \mu\left(X \setminus \bigsqcup\nolimits_{i < m} S_i B_i\right) + \frac{|S_i \setminus \mathrm{int}_L(S_i)|}{|S_i|} \\
& \stackrel{\eqref{eq:interior}}{<} \mu\left(X \setminus \bigsqcup\nolimits_{i < m} S_i B_i\right) + \frac{\e}{8} \\
& \stackrel{\eqref{eq:small_reminder}}{<} \frac{\e}{4}.
\end{split}
\end{equation}
Since the restrictions of traces on $A\coloneqq C(X) \rtimes G$ to $C(X)$ correspond precisely to the measures in $M_G(X)$, it follows that
\begin{equation} \label{eq:1_p}
\| 1 - p \|_{2, T(A)} < \e/4,
\end{equation}
hence
\begin{equation}
\| a_0 - pa_0p \|_{2,T(A)} \le \| a - a_0 \|_{2,T(A)} + \| a_0 - pa_0 p \|_{2, T(A)} \stackrel{\eqref{eq:a_a0}, \eqref{eq:1_p}}{<} \e.
\end{equation}
\end{proof}

We finally obtain the following.
\begin{corollary} \label{cor:cp}
Let $G$ be a countably infinite amenable group, let $X$ be a compact metric space, and let $G \curvearrowright X$ be a free minimal action with the URP. Then $C(X) \rtimes G$ has uniform property $\Gamma$.
\end{corollary}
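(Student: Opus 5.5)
The plan is to deduce \Cref{cor:cp} directly from \Cref{thm:LTH_gamma}, using \Cref{prop:LTH_crossed} to verify its hypotheses. Set $A \coloneqq C(X) \rtimes G$. \Cref{prop:LTH_crossed} already shows that $A$ is a tracially LTH-algebra with locally flat dimension growth, and it uses only freeness and the URP. What remains is to check the standing assumptions of \Cref{thm:LTH_gamma}: $A$ is separable, simple, unital and non-elementary. Once these are in place, \Cref{thm:LTH_gamma} gives that $A$ is uniformly McDuff, and in particular that it has uniform property $\Gamma$.

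The standing assumptions are verified as follows.
\begin{enumerate}
\item Separability holds because $X$ is a compact metric space, so $C(X)$ is separable, and $G$ is countable.
\item Unitality is immediate, since $X$ is compact.
\item Simplicity of $A$ follows from the classical theorem (Archbold--Spielberg) that the reduced crossed product of a minimal, topologically free action on $C(X)$ is simple. Freeness implies topological freeness, and the full and reduced crossed products coincide because $G$ is amenable.
\item $A$ is non-elementary because $G$ is infinite. Since $A$ is unital, being elementary would force $A \cong M_n$. However, $A$ contains the unitaries $u_g$ for infinitely many $g \in G$, and these are linearly independent, for instance via the faithful conditional expectation onto $C(X)$. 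Hence $A$ is infinite-dimensional.
\end{enumerate}

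Finally, $T(A) \ne \emptyset$. By amenability of $G$, the set $M_G(X)$ of invariant probability measures is non-empty, and each $\mu \in M_G(X)$ induces the trace $\mu \circ E$, where $E$ is the canonical conditional expectation onto $C(X)$. This is implicitly needed for the tracial notions used in \Cref{def:LTH} to be meaningful.

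I do not expect any step to be a genuine obstacle. The only point requiring care is that \Cref{thm:LTH_gamma} needs simplicity, which is exactly where minimality enters; \Cref{prop:LTH_crossed} itself does not use it.

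Alternatively, one could apply \Cref{cor:sr1_tad_gamma}, using stable rank one from \cite{LN:sr1} and tracially locally finite nuclear dimension from \Cref{prop:LTH_crossed}. That route, however, would additionally require the comparison property, so the route via \Cref{thm:LTH_gamma} is preferable.
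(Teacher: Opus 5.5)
Your proposal is correct and follows essentially the same route as the paper: \Cref{prop:LTH_crossed} supplies the tracially LTH structure with locally flat dimension growth, Archbold--Spielberg gives simplicity (this is where minimality enters), and \Cref{thm:LTH_gamma} concludes. The paper takes separability, unitality, non-elementarity and $T(A)\neq\emptyset$ for granted; you check them explicitly, and your checks are sound.
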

\begin{proof}
Since crossed products of free minimal actions are simple by \cite[Theorem 2]{AS:free_minimal}, the conclusion follows by \Cref{thm:LTH_gamma} and \Cref{prop:LTH_crossed}.
\end{proof}

The second part of \Cref{thm_main:Vill_cp} can be recovered via \cite[Corollary E]{naryshkin:urpc}, which shows that all free minimal actions of FC groups---that is of groups with finite conjugacy classes---have the URP. Moreover, Naryshkin's theorem yields \emph{comparison} in the sense of  \cite[Definition 3.2]{kerr:af} for such actions. As a consequence, uniform property $\Gamma$ for the corresponding crossed products could also be deduced by \Cref{cor:sr1_tad_gamma} using the main result of \cite{LN:sr1}, where Li and Niu show that the crossed products arising from free minimal dynamical systems of amenable groups with the URP and comparison (or, more precisely, the more operator algebraic \emph{Cuntz comparison on open sets} \cite[Definition 2.21]{LN:sr1}) have stable rank one (see also \cite{BGK:sr1} for an extension of those techniques beyond the nuclear setting).

It is worth noting, however, that this `stable rank one approach' does not, a priori, fully recover \Cref{cor:cp}, as it requires comparison---we say \emph{a priori} since no example of topological dynamical system without comparison is currently known.

\bibliographystyle{amsplain}
\bibliography{tracial_rr0}

\begin{bibdiv}
\begin{biblist}

\bib{AS:free_minimal}{article}{
      author={Archbold, R.~J.},
      author={Spielberg, J.~S.},
       title={Topologically free actions and ideals in discrete
  {$C^\ast$}-dynamical systems},
        date={1994},
        ISSN={0013-0915,1464-3839},
     journal={Proc. Edinburgh Math. Soc. (2)},
      volume={37},
      number={1},
       pages={119\ndash 124},
         url={https://doi.org/10.1017/S0013091500018733},
}

\bib{BGK:sr1}{unpublished}{
      author={Bell, Jamie},
      author={Geffen, Shirly},
      author={Kerr, David},
       title={Stable rank one in nonnuclear crossed products},
        date={2025},
        note={preprint arXiv:2511.20132},
}

\bib{BP:rr0}{article}{
      author={Brown, Lawrence~G.},
      author={Pedersen, Gert~K.},
       title={{$C^\ast$}-algebras of real rank zero},
        date={1991},
        ISSN={0022-1236,1096-0783},
     journal={J. Funct. Anal.},
      volume={99},
      number={1},
       pages={131\ndash 149},
         url={https://doi.org/10.1016/0022-1236(91)90056-B},
}

\bib{CCEGSTW}{unpublished}{
      author={Carrión, Jos\'e~R.},
      author={Castillejos, Jorge},
      author={Evington, Samuel},
      author={Gabe, James},
      author={Schafhauser, Christopher},
      author={Tikuisis, Aaron},
      author={White, Stuart},
       title={Tracially complete {$C^\ast$}-algebras},
        note={preprint arXiv:2310.20594},
}

\bib{CETWW}{article}{
      author={Castillejos, Jorge},
      author={Evington, Samuel},
      author={Tikuisis, Aaron},
      author={White, Stuart},
      author={Winter, Wilhelm},
       title={Nuclear dimension of simple {$C^\ast$}-algebras},
        date={2021},
        ISSN={0020-9910,1432-1297},
     journal={Invent. Math.},
      volume={224},
      number={1},
       pages={245\ndash 290},
         url={https://doi.org/10.1007/s00222-020-01013-1},
}

\bib{CETW:gamma}{article}{
      author={Castillejos, Jorge},
      author={Evington, Samuel},
      author={Tikuisis, Aaron},
      author={White, Stuart},
       title={Uniform property {$\Gamma$}},
        date={2022},
        ISSN={1073-7928,1687-0247},
     journal={Int. Math. Res. Not. IMRN},
      number={13},
       pages={9864\ndash 9908},
         url={https://doi.org/10.1093/imrn/rnaa282},
}

\bib{EHT:sr1}{article}{
      author={Elliott, George~A.},
      author={Ho, Toan~M.},
      author={Toms, Andrew~S.},
       title={A class of simple {$C^*$}-algebras with stable rank one},
        date={2009},
        ISSN={0022-1236,1096-0783},
     journal={J. Funct. Anal.},
      volume={256},
      number={2},
       pages={307\ndash 322},
         url={https://doi.org/10.1016/j.jfa.2008.08.001},
}

\bib{ELN:Villadsen}{article}{
      author={Elliott, George~A.},
      author={Li, Chun~Guang},
      author={Niu, Zhuang},
       title={Remarks on {V}illadsen algebras},
        date={2024},
        ISSN={0022-1236,1096-0783},
     journal={J. Funct. Anal.},
      volume={287},
      number={7},
       pages={Paper No. 110547, 55},
         url={https://doi.org/10.1016/j.jfa.2024.110547},
}

\bib{EN:minimal}{article}{
      author={Elliott, George~A.},
      author={Niu, Zhuang},
       title={The {C{$^*$}}-algebra of a minimal homeomorphism of zero mean
  dimension},
        date={2017},
        ISSN={0012-7094,1547-7398},
     journal={Duke Math. J.},
      volume={166},
      number={18},
       pages={3569\ndash 3594},
         url={https://doi.org/10.1215/00127094-2017-0033},
}

\bib{EN:sbp}{unpublished}{
      author={Elliott, George~A.},
      author={Niu, Zhuang},
       title={On the small boundary property, $\mathcal{Z}$-stability, and
  bauer simplexes},
        date={2024},
        note={Preprint arXiv:2406.09748},
}

\bib{EN:propS}{unpublished}{
      author={Elliott, George~A.},
      author={Niu, Zhuang},
       title={On the small boundary property and $\mathcal{Z}$-absorption},
        date={2025},
        note={preprint arXiv:2504.03611},
}

\bib{engelking:dim_fininf}{book}{
      author={Engelking, Ryszard},
       title={Theory of dimensions finite and infinite},
      series={Sigma Series in Pure Mathematics},
   publisher={Heldermann Verlag, Lemgo},
        date={1995},
      volume={10},
        ISBN={3-88538-010-2},
}

\bib{ES:gamma}{article}{
      author={Evington, Samuel},
      author={Schafhauser, Christopher},
       title={Uniform property {$ \Gamma $} and finite dimensional tracial
  boundaries},
        date={2025},
     journal={Canad. J. Math., First View},
       pages={1\ndash 22},
}

\bib{FL:sr1}{article}{
      author={Fu, Xuanlong},
      author={Lin, Huaxin},
       title={Tracial oscillation zero and stable rank one},
        date={2025},
        ISSN={0008-414X,1496-4279},
     journal={Canad. J. Math.},
      volume={77},
      number={2},
       pages={563\ndash 630},
         url={https://doi.org/10.4153/S0008414X24000099},
}

\bib{Fu:sr1_rr0}{unpublished}{
      author={Fu, Xuanlong},
       title={From stable rank one to real rank zero: A note on tracial
  approximate oscillation zero},
        date={2025},
        note={preprint arXiv:2512.23911},
}

\bib{GGKNV:tr_amen}{article}{
      author={Gardella, Eusebio},
      author={Geffen, Shirly},
      author={Kranz, Julian},
      author={Naryshkin, Petr},
      author={Vaccaro, Andrea},
       title={Tracially amenable actions and purely infinite crossed products},
        date={2024},
        ISSN={0025-5831,1432-1807},
     journal={Math. Ann.},
      volume={390},
      number={3},
       pages={3665\ndash 3690},
         url={https://doi.org/10.1007/s00208-024-02833-9},
}

\bib{GiolKerr}{article}{
      author={Giol, Julien},
      author={Kerr, David},
       title={Subshifts and perforation},
        date={2010},
        ISSN={0075-4102,1435-5345},
     journal={J. Reine Angew. Math.},
      volume={639},
       pages={107\ndash 119},
         url={https://doi.org/10.1515/CRELLE.2010.012},
}

\bib{GP:diff_top}{book}{
      author={Guillemin, Victor},
      author={Pollack, Alan},
       title={Differential topology},
   publisher={AMS Chelsea Publishing, Providence, RI},
        date={2010},
        ISBN={978-0-8218-5193-7},
         url={https://doi.org/10.1090/chel/370},
        note={Reprint of the 1974 original},
}

\bib{haagerup:quasi}{article}{
      author={Haagerup, Uffe},
       title={Quasitraces on exact {$C^\ast$}-algebras are traces},
        date={2014},
        ISSN={0706-1994,2816-5810},
     journal={C. R. Math. Acad. Sci. Soc. R. Can.},
      volume={36},
      number={2-3},
       pages={67\ndash 92},
}

\bib{kerr:af}{article}{
      author={Kerr, David},
       title={Dimension, comparison, and almost finiteness},
        date={2020},
        ISSN={1435-9855},
     journal={J. Eur. Math. Soc.},
      volume={22},
      number={11},
       pages={3697\ndash 3745},
         url={https://doi.org/10.4171/jems/995},
}

\bib{KLTV}{article}{
      author={Kopsacheilis, Grigoris},
      author={Liao, Hung-Chang},
      author={Tikuisis, Aaron},
      author={Vaccaro, Andrea},
       title={Uniform property {$\Gamma$} and the small boundary property},
        date={2026},
        ISSN={0002-9947,1088-6850},
     journal={Trans. Amer. Math. Soc.},
      volume={379},
      number={1},
       pages={487\ndash 509},
         url={https://doi.org/10.1090/tran/9506},
}

\bib{KR:central}{article}{
      author={Kirchberg, Eberhard},
      author={R{\o}rdam, Mikael},
       title={Central sequence {$C^*$}-algebras and tensorial absorption of the
  {J}iang-{S}u algebra},
        date={2014},
        ISSN={0075-4102,1435-5345},
     journal={J. Reine Angew. Math.},
      volume={695},
       pages={175\ndash 214},
         url={https://doi.org/10.1515/crelle-2012-0118},
}

\bib{KS:sbp}{article}{
      author={Kerr, David},
      author={Szab\'{o}, G\'{a}bor},
       title={Almost finiteness and the small boundary property},
        date={2020},
        ISSN={0010-3616,1432-0916},
     journal={Comm. Math. Phys.},
      volume={374},
      number={1},
       pages={1\ndash 31},
         url={https://doi.org/10.1007/s00220-019-03519-z},
}

\bib{KW:ample}{unpublished}{
      author={Kopsacheilis, Grigoris},
      author={Winter, Wilhelm},
       title={Diagonal comparison of ample ${C}^\ast$-algebras},
        date={2025},
        note={preprint arXiv:2410.05967},
}

\bib{LN:sr1}{unpublished}{
      author={Li, Chun~Guang},
      author={Niu, Zhuang},
       title={Stable rank one of ${C}({X}) \rtimes {\Gamma}$},
        date={2020},
        note={preprint arXiv:2008.03361},
}

\bib{LW:mdim}{article}{
      author={Lindenstrauss, Elon},
      author={Weiss, Benjamin},
       title={Mean topological dimension},
        date={2000},
        ISSN={0021-2172},
     journal={Israel J. Math.},
      volume={115},
       pages={1\ndash 24},
         url={https://doi.org/10.1007/BF02810577},
}

\bib{mcduff:central}{article}{
      author={McDuff, Dusa},
       title={Central sequences and the hyperfinite factor},
        date={1970},
        ISSN={0024-6115,1460-244X},
     journal={Proc. London Math. Soc. (3)},
      volume={21},
       pages={443\ndash 461},
         url={https://doi.org/10.1112/plms/s3-21.3.443},
}

\bib{MvN:IV}{article}{
      author={Murray, F.~J.},
      author={von Neumann, J.},
       title={On rings of operators. {IV}},
        date={1943},
        ISSN={0003-486X},
     journal={Ann. of Math. (2)},
      volume={44},
       pages={716\ndash 808},
         url={https://doi.org/10.2307/1969107},
}

\bib{naryshkin:urpc}{unpublished}{
      author={Naryshkin, Petr},
       title={{U}{R}{P}, comparison, mean dimension, and sharp shift
  embeddability},
        date={2024},
        note={preprint arXiv:2410.01757},
}

\bib{niu:transformation}{article}{
      author={Niu, Zhuang},
       title={{$\mathcal{Z}$}-stability of transformation group {${
  C}^\ast$}-algebras},
        date={2021},
        ISSN={0002-9947,1088-6850},
     journal={Trans. Amer. Math. Soc.},
      volume={374},
      number={10},
       pages={7525\ndash 7551},
         url={https://doi.org/10.1090/tran/8477},
      review={\MR{4315611}},
}

\bib{niu:rc_mdim_urp}{article}{
      author={Niu, Zhuang},
       title={Comparison radius and mean topological dimension: {R}okhlin
  property, comparison of open sets, and subhomogeneous {$ C^\ast$}-algebras},
        date={2022},
        ISSN={0021-7670,1565-8538},
     journal={J. Anal. Math.},
      volume={146},
      number={2},
       pages={595\ndash 672},
         url={https://doi.org/10.1007/s11854-022-0205-8},
}

\bib{EPR}{article}{
      author={Ortega, Eduard},
      author={Perera, Francesc},
      author={R{\o}rdam, Mikael},
       title={The corona factorization property, stability, and the {C}untz
  semigroup of a {$C^\ast$}-algebra},
        date={2012},
        ISSN={1073-7928,1687-0247},
     journal={Int. Math. Res. Not. IMRN},
      number={1},
       pages={34\ndash 66},
         url={https://doi.org/10.1093/imrn/rnr013},
}

\bib{Pears:dim}{book}{
      author={Pears, A.~R.},
       title={Dimension theory of general spaces},
   publisher={Cambridge University Press, Cambridge, England-New
  York-Melbourne},
        date={1975},
}

\bib{RobRor:divisibility}{article}{
      author={Robert, Leonel},
      author={R{\o}rdam, Mikael},
       title={Divisibility properties for {$C^\ast$}-algebras},
        date={2013},
        ISSN={0024-6115,1460-244X},
     journal={Proc. Lond. Math. Soc. (3)},
      volume={106},
      number={6},
       pages={1330\ndash 1370},
         url={https://doi.org/10.1112/plms/pds082},
}

\bib{RW:revisited}{article}{
      author={R{\o}rdam, Mikael},
      author={Winter, Wilhelm},
       title={The {J}iang-{S}u algebra revisited},
        date={2010},
        ISSN={0075-4102,1435-5345},
     journal={J. Reine Angew. Math.},
      volume={642},
       pages={129\ndash 155},
         url={https://doi.org/10.1515/CRELLE.2010.039},
}

\bib{Sato:traces}{unpublished}{
      author={Sato, Yasuhiko},
       title={Trace spaces of simple nuclear {$C^\ast$}-algebras with
  finite-dimensional extreme boundary},
        date={2012},
        note={Preprint arXiv:1209.3000},
}

\bib{STW:99problems}{unpublished}{
      author={Schafhauser, Christopher},
      author={Tikuisis, Aaron},
      author={White, Stuart},
       title={Nuclear {$C^\ast$}-algebras: 99 problems},
        date={2025},
        note={preprint arXiv:2506.10902},
}

\bib{thiel:cuntz}{unpublished}{
      author={Thiel, Hannes},
       title={The {C}untz semigroup},
        date={2016},
        note={Lecture notes from a course at the University of Münster,
  \url{http://hannesthiel.org/wp-content/OtherWriting/CuScript.pdf}},
}

\bib{Thiel:ranks}{article}{
      author={Thiel, Hannes},
       title={Ranks of operators in simple {$C^\ast$}-algebras with stable rank
  one},
        date={2020},
        ISSN={0010-3616,1432-0916},
     journal={Comm. Math. Phys.},
      volume={377},
      number={1},
       pages={37\ndash 76},
         url={https://doi.org/10.1007/s00220-019-03491-8},
}

\bib{toms:growth}{article}{
      author={Toms, Andrew~S.},
       title={Flat dimension growth for {$C^\ast$}-algebras},
        date={2006},
        ISSN={0022-1236,1096-0783},
     journal={J. Funct. Anal.},
      volume={238},
      number={2},
       pages={678\ndash 708},
         url={https://doi.org/10.1016/j.jfa.2006.01.010},
}

\bib{TW:1}{article}{
      author={Toms, Andrew~S.},
      author={Winter, Wilhelm},
       title={The {E}lliott conjecture for {V}illadsen algebras of the first
  type},
        date={2009},
        ISSN={0022-1236,1096-0783},
     journal={J. Funct. Anal.},
      volume={256},
      number={5},
       pages={1311\ndash 1340},
         url={https://doi.org/10.1016/j.jfa.2008.12.015},
}

\bib{TWW:fin_dim}{article}{
      author={Toms, Andrew~S.},
      author={White, Stuart},
      author={Winter, Wilhelm},
       title={{$\mathcal{Z}$}-stability and finite-dimensional tracial
  boundaries},
        date={2015},
        ISSN={1073-7928},
     journal={Int. Math. Res. Not. IMRN},
      number={10},
       pages={2702\ndash 2727},
         url={https://doi.org/10.1093/imrn/rnu001},
}

\bib{Villadsen:1}{article}{
      author={Villadsen, Jesper},
       title={Simple {$C^\ast$}-algebras with perforation},
        date={1998},
        ISSN={0022-1236,1096-0783},
     journal={J. Funct. Anal.},
      volume={154},
      number={1},
       pages={110\ndash 116},
         url={https://doi.org/10.1006/jfan.1997.3168},
}

\bib{Winter:cd_II}{article}{
      author={Winter, Wilhelm},
       title={Covering dimension for nuclear {$C^\ast$}-algebras. {II}},
        date={2009},
        ISSN={0002-9947,1088-6850},
     journal={Trans. Amer. Math. Soc.},
      volume={361},
      number={8},
       pages={4143\ndash 4167},
         url={https://doi.org/10.1090/S0002-9947-09-04602-9},
}

\bib{Winter:pure}{article}{
      author={Winter, Wilhelm},
       title={Nuclear dimension and {$\scr{Z}$}-stability of pure {$\rm
  C^\ast$}-algebras},
        date={2012},
        ISSN={0020-9910,1432-1297},
     journal={Invent. Math.},
      volume={187},
      number={2},
       pages={259\ndash 342},
         url={https://doi.org/10.1007/s00222-011-0334-7},
}

\bib{WZ:nuc_dim}{article}{
      author={Winter, Wilhelm},
      author={Zacharias, Joachim},
       title={The nuclear dimension of {$C^\ast$}-algebras},
        date={2010},
        ISSN={0001-8708,1090-2082},
     journal={Adv. Math.},
      volume={224},
      number={2},
       pages={461\ndash 498},
         url={https://doi.org/10.1016/j.aim.2009.12.005},
}

\bib{Zhang:rr0_III}{article}{
      author={Zhang, Shuang},
       title={{$C^\ast$}-algebras with real rank zero and the internal
  structure of their corona and multiplier algebras. {III}},
        date={1990},
        ISSN={0008-414X,1496-4279},
     journal={Canad. J. Math.},
      volume={42},
      number={1},
       pages={159\ndash 190},
         url={https://doi.org/10.4153/CJM-1990-010-5},
}

\bib{Zhang:rr0}{article}{
      author={Zhang, Shuang},
       title={Matricial structure and homotopy type of simple
  {$C^\ast$}-algebras with real rank zero},
        date={1991},
        ISSN={0379-4024},
     journal={J. Operator Theory},
      volume={26},
      number={2},
       pages={283\ndash 312},
}

\end{biblist}
\end{bibdiv}

\end{document}